%todo: consistency of $(S-s_i)$ vs $(S-\{s_i\})$
\documentclass[12pt]{article}
\usepackage{e-jc}

%\usepackage{show keys}

% We recommend these AMS packages:
\usepackage{amsmath,amssymb}
% We recommend the graphicx package for importing figures:
\usepackage{graphicx}
% Use this command to create hyperlinks (optional and recommended):
\usepackage[colorlinks=true,citecolor=black,linkcolor=black,urlcolor=blue]{hyperref}

%%For images
\usepackage{tikz}
\tikzset{
	element/.style={circle,fill=black,scale=0.5}%,label distance=0.15}
}
\usetikzlibrary{calc}

\usepackage{enumerate}

\usepackage{color}
\newcommand{\blue}{\textcolor{black}} %change between black and blue to show changes

\newcommand{\orange}{\textcolor{black}}

\sloppy

% Give the submission and acceptance dates in the format shown.
% The editors will insert the publication date in the third argument.
\dateline{October 5, 2020}{May 4, 2021}{TBD}

% Give one or more subject codes separated by commas.
% Codes are available from http://www.ams.org/mathscinet/freeTools.html
\MSC{05B35}
\Copyright{The authors. Released under the CC BY-ND license (International 4.0).}
\usepackage{latexsym}
\usepackage{epsfig}
\usepackage{epic}
%\usepackage{amssymb}
%\sloppy
%\setlength{\unitlength}{1mm}
%
%\parskip=1em

%\newtheorem{theorem}{Theorem}[section]
%\newtheorem{lemma}[theorem]{Lemma}
%\newtheorem{corollary}[theorem]{Corollary}
%\newtheorem{proposition}[theorem]{Proposition}
\newtheorem{sublemma}{}[theorem]

%\newtheorem{conjecture}[theorem]{Conjecture}
%\newtheorem{question}[theorem]{Question}

%%new commands

\newcommand{\cl}{{\rm cl}}
\newcommand{\si}{{\rm si}}
\newcommand{\co}{{\rm co}}

\newcommand{\delete}{\backslash}

\newcommand{\R}{\mathbb{R}}

%union

%intersection

% If needed, include a line break (\\) at an appropriate place in the title.
\title{Elastic Elements in $3$-Connected Matroids}

% Input author, affiliation, address and support information as follows;
% The address should include the country, but does not have to include
%    the street address. Give at least one email address.

\author{George Drummond\\
	\small School of Mathematics and Statistics\\[-0.8ex]
	\small University of Canterbury\\[-0.8ex] 
	\small  Christchurch, New Zealand\\
	\small\tt george.drummond@pg.canterbury.ac.nz\\
	\and
	Zach Gershkoff\\
	\small Mathematics Department\\[-0.8ex]
	\small Louisiana State University\\[-0.8ex]
	\small  Baton Rouge, Louisiana, USA\\
	\small\tt zgersh2@lsu.edu\\
	\and
	Susan Jowett\\
	\small School of Mathematics and Statistics\\[-0.8ex]
	\small Victoria University of Wellington\\[-0.8ex]
	\small Wellington, New Zealand\\
	\small\tt susan.jowett@vuw.ac.nz\\
	\and
	Charles Semple\\
	\small School of Mathematics and Statistics\\[-0.8ex]
	\small University of Canterbury\\[-0.8ex]
	\small Christchurch, New Zealand\\
	\small\tt charles.semple@canterbury.ac.nz\\
	\and 
	Jagdeep Singh\\
	\small Mathematics Department\\[-0.8ex]
	\small Louisiana State University\\[-0.8ex]
	\small  Baton Rouge, Louisiana, USA\\
	\small\tt jsing29@lsu.edu}

\begin{document}

%%%%%%%%%%%%%%%%%%%%%%%%%%%%%%%%%%%%%%%%%%%%%%%%%%%%%%%%%%%%%%%%%%%%%%

%\thispagestyle{empty}
\maketitle

\begin{abstract}
It follows by Bixby's Lemma that if $e$ is an element of a $3$-connected matroid $M$, then either $\co(M\delete e)$, the cosimplification of $M\delete e$, or $\si(M/e)$, the simplification of $M/e$, is $3$-connected. A natural question to ask is whether $M$ has an element $e$ such that both $\co(M\delete e)$ and $\si(M/e)$ are $3$-connected. Calling such an element ``elastic'', in this paper we show that if $|E(M)|\ge 4$, then $M$ has at least four elastic elements provided $M$ has no $4$-element fans \blue{and, up to duality, $M$ has no $3$-separating set $S$ that is the disjoint union of a rank-$2$ subset and a corank-$2$ subset of $E(M)$ such that $M|S$ is isomorphic to a member or a single-element deletion of a member of a certain family of matroids}.
\end{abstract}

%\maketitle

\section{Introduction}

A result widely used in the study of $3$-connected matroids is due to Bixby~\cite{Bixby}: if $e$ is an element of a $3$-connected matroid $M$, then either $M\delete e$ or $M/e$ has no non-minimal $2$-separations, in which case, $\co(M\delete e)$, the cosimplification of $M$, or $\si(M/e)$, the simplification of $M$, is $3$-connected. A $2$-separation $(X, Y)$ is {\em minimal} if $\min \{|X|, |Y|\}=2$. This result is commonly referred to as Bixby's Lemma. Thus, although an element $e$ of a $3$-connected matroid $M$ may have the property that neither $M\delete e$ nor $M/e$ is $3$-connected, Bixby's Lemma says that at least one of $M\delete e$ and $M/e$ is close to being $3$-connected in a very natural way. In this paper, we are interested in whether or not there are elements $e$ in $M$ such that both $\co(M\delete e)$ and $\si(M/e)$ are $3$-connected, in which case, we say $e$ is {\em elastic}. In general, a $3$-connected matroid $M$ need not have any elastic elements. For example, all wheels and whirls of rank at least four have no elastic elements. The reason for this is that every element of such a matroid is in a $4$-element fan and, geometrically, every $4$-element fan is positioned in \blue{a certain way relative} to the rest of the elements of the matroid. \blue{However, $4$-element fans are not the only obstacles to $M$ having elastic elements.}

\blue{Let $n\ge 3$, and let $Z=\{z_1, z_2, \ldots, z_n\}$ be a basis of $PG(n-1, \R)$. Suppose that $L$ is a line that is freely placed relative to $Z$. For each $i\in \{1, 2, \ldots, n\}$, let $w_i$ be the unique point of $L$ contained in the hyperplane spanned by $Z-\{z_i\}$. Let $W=\{w_1, w_2, \ldots, w_n\}$, and let $\Theta_n$ denote the restriction of $PG(n-1, \R)$ to $W\cup Z$. Note that \blue{$\Theta_n$ is $3$-connected and} $Z$ is a corank-$2$ subset of $\Theta_n$. For all $i\in \{1, 2, \ldots, n\}$, we denote the matroid $\Theta_n\delete w_i$ by $\Theta^-_n$. The matroid $\Theta^-_n$ is well defined as, up to isomorphism, $\Theta_n\delete w_i\cong \Theta_n\delete w_j$ for all $i, j\in \{1, 2, \ldots, n\}$. \blue{For the interested reader,} the matroid $\Theta_n$ underlies the matroid operation of segment-cosegment exchange~\cite{oxl00} which generalises the operation of delta-wye exchange. A more formal definition of $\Theta_n$ is given in Section~\ref{thetasep}.}

\blue{If $n=3$, then $\Theta_3$ is isomorphic to $M(K_4)$. However, for all $n\geq 4$, the matroid $\Theta_n$ has no $4$-element fans and, also, no elastic elements.} \blue{Furthermore, for all $n\ge 3$, the set $W$ is a modular flat of $\Theta_n$~\cite{oxl00}. Thus, if $M$ is a matroid and $W$ is a subset of $E(M)$ such that $M|W\cong U_{2, n}$, then the generalised parallel connection $P_W(\Theta_n, M)$ of $\Theta_n$ and $M$ exists. In particular, it is straightforward to construct $3$-connected matroids having no $4$-element fans and no elastic elements. For example, take $U_{2, n}$ and repeatedly use the generalised parallel connection to ``attach'' copies of $\Theta_k$, where $4\le k\le n$, to any $k$-element subset of the elements of $U_{2, n}$.}

%If $n\ge \textcolor{green}{5}$, then $\Theta^-_n$ is $3$-connected, and $\Theta^-_n$ has no $4$-element fans and at most one elastic element.

%More generally, for $n\geq 3$, consider a matroid $M'$ achieved as the generalised parallel connection of a number of copies of $\Theta_n$ across their shared $n$-element segment. It is straightforward to show that $M'$ is $3$-connected, has no elastic elements and, when $n\geq 4$, has no $4$-element fans.

\blue{Let $M$ be a $3$-connected matroid, and let $A$ and $B$ be rank-$2$ and corank-$2$ subsets of $E(M)$. We say that $A\cup B$ is a {\em $\Theta$-separator} of $M$ if \blue{$r(M)\ge 4$ and $r^*(M)\ge 4$, and} either $M|(A\cup B)$ or $M^*|(A\cup B)$ is isomorphic to one of the matroids $\Theta_n$ and $\Theta^-_n$ for some $n\ge 3$.} \blue{We will show in Section~\ref{thetasep} that if $S$ is a $\Theta$-separator of $M$, then $S$ contains at most one elastic element.} \blue{Note that if $r(M)=3$, then $\si(M/e)$ is $3$-connected for all $e\in E(M)$, while if $r^*(M)=3$, then $\co(M\delete e)$ is $3$-connected for all $e\in E(M)$.} \blue{The main theorem of this paper is that, alongside $4$-element fans, $\Theta$-separators are the only obstacles to elastic elements in $3$-connected matroids.}

A $3$-separation $(A, B)$ of a matroid is {\em vertical} if $\min\{r(A), r(B)\}\ge 3$. Now, let $M$ be a matroid and let $(X, \{e\}, Y)$ be a partition of $E(M)$. We say that $(X, \{e\}, Y)$ is a {\em vertical $3$-separation} of $M$ if $(X\cup \{e\}, Y)$ and $(X, Y\cup \{e\})$ are both vertical $3$-separations and $e\in \cl(X)\cap \cl(Y)$. Furthermore, $Y\cup \{e\}$ is \emph{maximal} in this separation if there exists no vertical $3$-separation $(X', \{e'\}, Y')$ of $M$ such that $Y\cup \{e\}$ is a proper subset of $Y'\cup \{e'\}$. Essentially, all of the work in the paper goes into establishing the following theorem.

\begin{theorem}
\label{main1}
Let $M$ be a $3$-connected matroid with a vertical $3$-separation $(X, \{e\}, Y)$ such that $Y\cup \{e\}$ is maximal. Then at least one of the following holds:
\begin{enumerate}[{\rm (i)}]
\item $X$ contains at least two elastic elements;

\item $X\cup \{e\}$ is a $4$-element fan; \blue{or}

\item \blue{$X$ is contained in a $\Theta$-separator}.
\end{enumerate}
\end{theorem}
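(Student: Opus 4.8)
The plan is to recast elasticity in the language of the separations already introduced. Since $M$ is $3$-connected, a short computation shows that, for $f\in E(M)$, the simplification $\si(M/f)$ is $3$-connected if and only if $f$ is not the guts element of a vertical $3$-separation of $M$, and dually $\co(M\delete f)$ is $3$-connected if and only if $f$ is not the guts element of a vertical $3$-separation of $M^*$, that is, of a \emph{cyclic} $3$-separation of $M$. Thus $f$ is elastic precisely when it is neither a vertical guts element nor a cyclic coguts element, and Bixby's Lemma says that no element is both. To prove the theorem I would therefore show that at most $|X|-2$ elements of $X$ are vertical guts or cyclic coguts elements, with the exceptions being exactly the two listed configurations. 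Note $r(X)\ge 3$ forces $|X|\ge 3$, so this bound is meaningful.

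First I would use the maximality of $Y\cup\{e\}$ to localise the analysis. Maximality forces strong closure properties on $Y\cup\{e\}$, and combined with $e\in\cl(X)\cap\cl(Y)$ and $\lambda(X)=2$ it should follow that any vertical $3$-separation $(A,\{f\},B)$ with $f\in X$ keeps all but a rank-$2$ guts portion of $Y$ together on one side; the dual statement handles cyclic $3$-separations. Consequently the vertical guts and cyclic coguts elements lying in $X$ can be detected within the local structure of $X$ relative to the rank-$2$ set by which it is attached to the rest of $M$ (formally, a bounded-size minor $\hat M$ obtained by collapsing $Y$ to this rank-$2$ interface), and an element of $X$ is elastic in $M$ if and only if it is neither a guts nor a coguts element of $\hat M$. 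By duality the two kinds of element are treated symmetrically, and the problem reduces to a local classification.

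The heart of the argument is then the following dichotomy: a $3$-connected matroid in which at most one element is neither a guts nor a coguts element is, up to duality and small cases, a fan or one of the matroids $\Theta_n$, $\Theta^-_n$. I would establish this by a case analysis on $r(X)$ and $r^*(X)$, using the identity $r(X)+r^*(X)=|X|+2$, together with peeling off a path of $3$-separations (a fan/flower decomposition) from $X\cup\{e\}$. Wherever this decomposition has any slack---an element in sufficiently general position relative to both the rank-$2$ guts line and the corank-$2$ structure---I would exhibit it as neither a guts nor a coguts element, and then locate two such elements to reach conclusion (i). The degenerate possibilities are precisely two: $X\cup\{e\}$ collapses to a $4$-element fan, giving conclusion (ii); or every element is pinned, each $w_i$ being forced as the unique meet of the guts line with the hyperplane spanned by the remaining corank-$2$ elements, which is exactly the incidence geometry defining $\Theta_n$ or $\Theta^-_n$, giving conclusion (iii). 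The earlier result that a $\Theta$-separator contains at most one elastic element confirms that in this case two elastic elements genuinely cannot be found.

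I expect the main obstacle to be this final classification, and in particular recognising the $\Theta_n$ geometry from purely separation-theoretic hypotheses. Failing to find two elastic elements means that contracting or deleting almost every element of $X$ manufactures a non-minimal $2$-separation; translating this uniform behaviour back into the rigid rank-$2$-plus-corank-$2$ incidence structure of $\Theta_n$, correctly treating the single-element-deletion variant $\Theta^-_n$, and disposing of the low-rank and low-corank boundary cases (where the reduction to $\hat M$ degenerates, and where the observations on $r(M)=3$ and $r^*(M)=3$ apply) is where the bulk of the technical work will lie.
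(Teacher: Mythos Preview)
Your outline diverges substantially from the paper's argument and, more importantly, rests on a reduction that is not justified and almost certainly fails as stated.

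The paper does not pass to any local matroid $\hat M$. Instead it splits on whether $X$ contains a non-contractible element. If so, pick such an $e_2\in X$, take a vertical $3$-separation $(X_2,\{e_2\},Y_2)$ with $Y_2\cup\{e_2\}$ closed, and \emph{cross} it against $(X,\{e\},Y)$. A technical lemma (extracted from earlier work of Oxley, Semple and Whittle) pins down the quadrants: $(X\cap X_2)\cup\{e_2\}$ has rank~$2$, and either $|Y\cap X_2|=1$ (whence $X_2$ is a rank-$3$ cocircuit) or $(X\cap Y_2)\cup\{e_1,e_2\}$ also has rank~$2$. Each case is then handled by direct arguments using Lemmas~\ref{delete1}, \ref{triangle} and~\ref{contract+}; the $\Theta$-structure emerges only in the specific sub-case $|X\cap X_2|=1$, $|X\cap Y_2|\ge 2$, via repeated application of Lemma~\ref{contract+} to the segment $(X\cap Y_2)\cup\{e_1,e_2\}$. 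If every element of $X$ is contractible, a dual argument (together with an independence/dependence split on $X$) applies.

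The gap in your plan is the sentence ``an element of $X$ is elastic in $M$ if and only if it is neither a guts nor a coguts element of $\hat M$''. You have not defined $\hat M$, and there is no evident way to collapse $Y$ to a rank-$2$ interface so that this equivalence holds. A vertical $3$-separation $(A,\{f\},B)$ with $f\in X$ typically has both $A$ and $B$ meeting $Y$ nontrivially---this is precisely what the crossing lemma is for---so the relevant separations do not live in $M|X$ or in any fixed minor obtained by contracting into $Y$. Your subsequent ``dichotomy'' (classifying $3$-connected matroids with at most one element that is neither guts nor coguts) is then a statement about an undefined object, and even granting some $\hat M$, that classification would itself be a theorem of comparable difficulty for which you give no mechanism beyond ``case analysis on $r(X)$ and $r^*(X)$''. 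The paper avoids all of this by never leaving $M$: it works directly with crossing $3$-separations and the segment lemmas of Section~\ref{sec: segments}.
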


Note that, in the context of Theorem~\ref{main1}, if $X\cup \{e\}$ is a $4$-element fan, then it is possible that $X$ contains two elastic elements. For example, consider the rank-$4$ matroids $M_1$ and $M_2$ for which geometric representations are shown in Fig.~\ref{fig: elastic fans}. For each $\blue{i}\in \{1, 2\}$, the tuple $F=(e_1, e_2, e_3, e_4)$ is a $4$-element fan of $M_i$ and $(F-\{e_1\}, \{e_1\}, E(M_i)-F)$ is a vertical $3$-separation of $M_i$. In $M_1$, none of $e_2$, $e_3$, and $e_4$ \blue{are} elastic, while in $M_2$, both $e_2$ and $e_3$ are elastic. However, provided $X\cup \{e\}$ is a maximal fan, the instance illustrated in Fig.~\ref{fig: elastic fans}(i) is essentially the only way in which $X$ does not contain two elastic elements. This is made more precise in Section~\ref{fansplus}. \blue{\blue{As noted above,} if $X$ is contained in a $\Theta$-separator, then $X$ contains at most one elastic element. The details of the way in which this happens is given in Section~\ref{thetasep}.}

\begin{figure}[ht]
	\centering
	\hspace{2cm}
	\begin{minipage}{0.4\textwidth}
		\begin{tikzpicture}[scale=1]
		\coordinate (cntrl1) at (-50:2.5) {};
		\coordinate (cntrl2) at (-20:2.5) {};
		%\node[draw,fill=white] at (cntrl1) {};
		%\node[draw,fill=white] at (cntrl2) {};
		\begin{scope}[every node/.style=element]
		\coordinate (c1) at (-210:2) {};
		\coordinate (c2) at (90:1) {};
		\node (e1) at (30:2) {};
		\node (e3) at (0,4) {};
		\coordinate (b) at (-120:2) {};

		\draw (c1) to (e1);
		\node (e2) at ($(e1)!0.5!(e3)$) {};

		\draw (c1) to node[pos=0.3] {} node[pos=0.6] {} (b);
		\draw (e1) to node[pos=0.3] {} node[pos=0.6] {} (b);
		\draw (e1) to (e2) to (e3);
		\draw (e2) to (c1);
		\draw (e3) to (c2);
		
		\node (e4) at (intersection of e3--c2 and e2--c1) {};
		
		\end{scope}
		\node[scale=1.0] at ($(e1)+(30:0.5)$) {$e_1$};
		\node[scale=1.0] at ($(e2)+(30:0.5)$) {$e_3$};
		\node[scale=1.0] at ($(e3)+(30:0.5)$) {$e_2$};
		\node[scale=1.0] at ($(e4)+(-30:0.5)$) {$e_4$};
		\node[scale=1.0] at (-90:2.5) {(i) $M_1$};
		\end{tikzpicture}
	\end{minipage}
	\begin{minipage}{0.4\textwidth}
		\begin{tikzpicture}[scale=1]
		\coordinate (cntrl1) at (-50:2.5) {};
		\coordinate (cntrl2) at (-20:2.5) {};
		%\node[draw,fill=white] at (cntrl1) {};
		%\node[draw,fill=white] at (cntrl2) {};
		\begin{scope}[every node/.style=element]
		\coordinate (c1) at (-210:2) {};
		\coordinate (c2) at (90:1) {};
		\coordinate (i1) at ($(c1)!0.5!(c2)$) {};
		\node (e1) at (30:2) {};
		\node (e3) at (0,4) {};
		\coordinate (b) at (-120:2) {};

		\draw (c1) to (e1);
		\node (e2) at ($(e1)!0.5!(e3)$) {};

		\draw (c1) to node[pos=0.3] {} node[pos=0.6] {} (b);
		\draw (e1) to node[pos=0.3] {} node[pos=0.6] {} (b);
		\draw (e1) to (e2) to (e3);
		\draw (e2) to (i1);
		\draw (e3) to (c2);
		
		\node (e4) at (intersection of e3--c2 and e2--i1) {};
		
		\end{scope}
		\node[scale=1.0] at ($(e1)+(30:0.5)$) {$e_1$};
		\node[scale=1.0] at ($(e2)+(30:0.5)$) {$e_3$};
		\node[scale=1.0] at ($(e3)+(30:0.5)$) {$e_2$};
		\node[scale=1.0] at ($(e4)+(-30:0.5)$) {$e_4$};
		\node[scale=1.0] at (-90:2.5) {(ii) $M_2$};
		\end{tikzpicture}
	\end{minipage}
	\caption{For each $i\in \{1, 2\}$, the tuple $(e_1, e_2, e_3, e_4)$ is a $4$-element fan and the partition $(\{e_2, e_3, e_4\}, \{e_1\}, E(M_i)-\{e_1, e_2, e_3, e_4\})$ of $E(M_i)$ is a vertical $3$-separation of $M_i$. Furthermore, in $M_1$, none of $e_2$, $e_3$, and $e_4$ are elastic, while in $M_2$, both $e_2$ and $e_3$ are elastic.}
	\label{fig: elastic fans}
\end{figure}

%For example, consider the cycle matroid $M(G)$ of the graph $G$ shown in Fig.~\ref{fig: elastic fan}. Here $(e_1, e_2, e_3, e_4)$ is a $4$-element fan and $(\{e_2, e_3, e_4\}, \{e_1\}, E-\{e_1, e_2, e_3, e_4\})$ is a vertical $3$-separation of $M(G)$, where $E$ is the ground set of $M(G)$. It is easily checked that both $e_2$ and $e_3$ are elastic elements of $M(G)$.
%
%\vspace{-0.5cm}
%\begin{figure}[ht]
%	\begin{tikzpicture}[scale = 0.9]
%	\coordinate (c1) at (50:3.5) {};
%	\coordinate (c2) at (120:3.5) {};
%	\begin{scope}[every node/.style={shape=circle,scale=0.4,fill}]
%	\node at (0,0) {};
%	\foreach \x in {0,72,144,216,288}{
%		\draw (0,0) to node[pos=1] {} (\x:1.5);
%	}	
%	\node (v2) at (0:2.5) {};
%	%\node at (c1) {};
%	%\node at (c2) {};
%	\end{scope}
%	\draw circle (1.5);
%	\draw (v2) to node[pos=0.5, inner sep=-1pt, label={$e_3$}] {} (0:1.5) ;
%	
%	\draw[bend left= 30] (v2) to node[pos=0.4, label=below:{$e_2$}] {} (288:1.5);
%	\draw (v2) .. controls (c1) and (c2) .. (144:1.5) node[pos=0.5,above, label=right:{$e_4$}] {};
%	
%	\node at (-36:1.25) {$e_1$};
%	
%	\node[scale=1.3] at (0,-2.25) {$G$};
%	
%	\end{tikzpicture}
%	\caption{In the cycle matroid $M(G)$ of $G$, the tuple $(e_1, e_2, e_3, e_4)$ is a fan, and both $e_2$ and $e_3$ are elastic elements of $M(G)$.}
%	\label{fig: elastic fan}
%\end{figure}

An almost immediate consequence of Theorem~\ref{main1} is the following corollary.

\begin{corollary}
\label{main2}
Let $M$ be a $3$-connected matroid. If $|E(M)|\ge 7$, then $M$ contains at least four elastic elements provided $M$ has no $4$-element fans \blue{and no $\Theta$-separators}. Moreover, if $|E(M)|\leq 6$, then every element of $M$ is elastic.
\end{corollary}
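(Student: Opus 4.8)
The plan is to derive the corollary from Theorem~\ref{main1} together with two self-duality observations and one standard correspondence, so that almost all of the work is already done. First, I would record that elasticity is self-dual: since $M^*\delete e=(M/e)^*$ and $M^*/e=(M\delete e)^*$, and a matroid is $3$-connected exactly when its dual is, an element $e$ is elastic in $M$ if and only if it is elastic in $M^*$. The two hypotheses are also self-dual: the dual of a $4$-element fan is a $4$-element fan, and by its very definition $S$ is a $\Theta$-separator of $M$ precisely when it is one of $M^*$. Second, I would invoke the standard correspondence between vertical $3$-separations and simplifications: for a $3$-connected matroid, $\si(M/e)$ is $3$-connected for every $e\in E(M)$ if and only if $M$ has no vertical $3$-separation, and dually $\co(M\delete e)$ is $3$-connected for every $e$ if and only if $M^*$ has no vertical $3$-separation. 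These reductions let me treat $M$ and $M^*$ on the same footing and dispose of the highly connected case uniformly.

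For the first statement, assume $|E(M)|\ge 7$ with no $4$-element fans and no $\Theta$-separators. The key step is that a single vertical $3$-separation already yields \emph{four} elastic elements. Starting from a vertical $3$-separation $(A,\{e\},B)$, I would enlarge $B\cup\{e\}$ as much as possible to obtain a vertical $3$-separation $(X_1,\{e_1\},Y_1)$ with $Y_1\cup\{e_1\}$ maximal and $Y_1\cup\{e_1\}\supseteq B\cup\{e\}$, forcing $X_1\subseteq A$; symmetrically, enlarging $A\cup\{e\}$ gives $(X_2,\{e_2\},Y_2)$ with $Y_2\cup\{e_2\}$ maximal and $X_2\subseteq B$. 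Applying Theorem~\ref{main1} to each separation, and noting that outcomes (ii) and (iii) are excluded by the hypotheses, each of $X_1$ and $X_2$ contains at least two elastic elements; since $X_1\subseteq A$ and $X_2\subseteq B$ are disjoint, $M$ has at least four elastic elements. If instead it is $M^*$ that has a vertical $3$-separation, I would run this same argument in $M^*$ (its hypotheses hold by self-duality) and transfer the elastic elements back. Finally, if neither $M$ nor $M^*$ has a vertical $3$-separation, the correspondence gives that $\si(M/e)$ and $\co(M\delete e)$ are both $3$-connected for every $e$, so all $|E(M)|\ge 7$ elements are elastic.

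For the ``moreover'' statement, suppose $|E(M)|\le 6$. A vertical $3$-separation $(X,\{e\},Y)$ requires $r(X)\ge 3$ and $r(Y)\ge 3$, hence $|X|\ge 3$ and $|Y|\ge 3$, giving $|E(M)|\ge 7$; thus $M$ has no vertical $3$-separation, and dually $M^*$ has none either. By the correspondence, $\si(M/e)$ and $\co(M\delete e)$ are $3$-connected for every $e\in E(M)$, so every element of $M$ is elastic.

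The substantive point—where the argument does more than merely quote Theorem~\ref{main1}, which only delivers two elastic elements per application—is the passage from two elastic elements to four. This is exactly what the two opposite maximal extensions achieve: maximising $B\cup\{e\}$ and $A\cup\{e\}$ separately confines the two ``small sides'' $X_1$ and $X_2$ to the disjoint sets $A$ and $B$, so the two pairs of elastic elements cannot coincide. I expect the only other item needing care to be the (standard) equivalence between the absence of vertical $3$-separations and the $3$-connectivity of all simplifications $\si(M/e)$, which is what allows both the highly connected case and the small-matroid case to be handled cleanly.
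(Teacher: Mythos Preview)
Your proposal is correct and follows essentially the same route as the paper: both proofs use Lemma~\ref{vertical1} to reduce to the existence of a vertical $3$-separation $(A,\{e\},B)$, then apply Theorem~\ref{main1} twice---once after maximising one side and once after maximising the other---to obtain two disjoint pairs of elastic elements. The only cosmetic difference is that the paper begins with a non-elastic element and handles duality with a single ``up to duality'' clause, whereas you explicitly separate the cases according to whether $M$, $M^*$, or neither has a vertical $3$-separation; via Lemma~\ref{vertical1} these framings are equivalent.
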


\blue{The condition in Corollary~\ref{main2} that $M$ has no $4$-element fans and no $\Theta$-separators is not necessarily that restrictive. For example, if $N$ is an excluded minor for $GF(q)$-representability (or, more generally, for $\mathbb P$-representability, where $\mathbb P$ is a partial field), then $N$ has no $4$-element fans and no $\Theta$-separators. The fact that $N$ has no $4$-element fans is well known and straightforward to show. To see that $N$ has no $\Theta$-separators, suppose that $N$ has a $\Theta$-separator. By duality, we may assume that $N$ has rank-$2$ and corank-$2$ sets $W$ and $Z$, respectively, such that $M|(W\cup Z)$ is isomorphic to either $\Theta_n$ or $\Theta^-_n$, for some $n\ge 3$. Say $M|(W\cup Z)$ is isomorphic to $\Theta_n$. Then the matroid $N'$ obtained from $N$ by a cosegment-segment exchange on $Z$ is isomorphic to the matroid obtained from $N$ by deleting $Z$ and, for each $w\in W$, adding an element in parallel to $w$. It is shown in \cite[Theorem~1.1]{oxl00} that the class of excluded minors for $GF(q)$-representability (or, more generally, $\mathbb P$-representability) is closed under the operation of cosegment-segment exchange, and so $N'$ is also an excluded minor for $GF(q)$-representability. But $N'$ contains elements in parallel, a contradiction. The same argument holds if $M|(W\cup Z)$ is isomorphic to $\Theta^-_n$ except that, in applying a cosegment-segment exchange, we additionally add an element freely in the span of $W$.}

%We remark here that the reason every element of a $3$-connected matroid $M$ is elastic if $|E(M)|\le 6$ is simply because $M$ does not have enough elements to have a vertical $3$-separation of the form $(X, \{e\}, Y)$. In particular, if $e$ is not elastic, then, up to duality, $\si(M/e)$ is not $3$-connected. Thus, by Lemma~3.1 in~\cite{Oxley et al}, $M$ has a vertical $3$-separation $(X, \{e\}, Y)$, and so $|E(M)|\ge 7$.

Like Bixby's Lemma, Corollary~\ref{main2} is an inductive tool for handling the removal of elements of $3$-connected matroids while preserving connectivity. The most well-known examples of such tools are Tutte's Wheels-and-Whirls Theorem~\cite{tut66} and Seymour's Splitter Theorem~\cite{sey80}. In both theorems, this removal preserves $3$-connectivity. More recently, there have been analogues of these theorems in which the removal of elements preserves $3$-connectivity up to simplification and cosimplification. These analogues have additional conditions on the elements being removed. Let $B$ be a basis of a $3$-connected matroid $M$, and suppose that $M$ has no $4$-element fans. Say $M$ is representable over some field $\mathbb F$ and that we are given a standard representation of $M$ over $\mathbb F$. To keep the information displayed by the representation in an $\mathbb F$-representation of a single-element deletion or a single element contraction of $M$, we need to avoid pivoting. To do this, we want to either contract an element in $B$ or delete an element in $E(M)-B$. Whittle and Williams~\cite{Whittle&Williams} showed that if $|E(M)|\ge 4$, then $M$ has at least four elements $e$ such that either $\si(M/e)$ is $3$-connected if $e\in B$ or $\co(M\delete e)$ is $3$-connected if $e\in E(M)-B$. Brettell and Semple~\cite{Brettell&Semple} establish a Splitter Theorem counterpart to this last result where, again, $3$-connectivity is preserved up to simplification and cosimplification. These last two results are related to an earlier result of Oxley et al.~\cite{Oxley et al}. Indeed, the starting point for the proof of Theorem~\ref{main1} is~\cite{Oxley et al}. 

%\begin{figure}[ht]
%	\centering
%	\begin{tikzpicture}[scale=1]
%	\coordinate (cntrl1) at (-50:2.5) {};
%	\coordinate (cntrl2) at (-20:2.5) {};
%	%\node[draw,fill=white] at (cntrl1) {};
%	%\node[draw,fill=white] at (cntrl2) {};
%	\begin{scope}[every node/.style=element]
%	\coordinate (c1) at (-210:2) {};
%	\coordinate (c2) at (90:1) {};
%	\coordinate (c3) at (30:2) {};
%	\node (t1) at (1,4) {};
%	\node (b1) at (-90:2) {};
%	
%	\node (b4) at (0,0) {};
%	\node (b2) at ($(b1)!0.5!(c1)$) {};
%	\node (b3) at ($(b1)!0.5!(c3)$) {};
%	
%	
%	\draw (c1) to (c3);
%	\draw (t1) to node[pos=0.3] {}(c1);
%	\draw (t1) to node[pos=0.7] {}(c2);
%	\draw (t1) to node[pos=0.5] {}(c3);
%	\draw (c1) to (b2) to (b1) to (b3) to (c3);
%	\draw (c1) to (b4) to (b3);
%	\draw (c3) to (b4) to (b2);
%	\draw (c2) to (b4) to (b1);
%	
%	\draw circle (1);
%	\end{scope}
%	\node[scale=1.25] at (1.3,4.2) {$e$};
%	\end{tikzpicture}
%	\caption{Placeholder caption}
%	\label{fig: minors}
%\end{figure}
%
%
%
%We also remark that in requiring a matroid to have no $4$-element fans, one is not being unduly restrictive. Indeed, the excluded minors for $GF(q)$-representability have no $4$-element fans and it is shown in \cite{Oxley&Wu} that one can, in some natural sense ``break off'' the $4$-element fans from an arbitrary $3$-connected matroid. %Moreover, large fans are easily dealt with in a represented matroid... 

The paper is organised as follows. The next section contains some necessary preliminaries on connectivity, while \orange{Section~\ref{fansplus} considers fans and determines exactly which elements of a fan are elastic}. Section~\ref{sec: segments} establishes two results concerning when an element in a rank-$2$ restriction of a $3$-connected matroid is deletable or contractible, \blue{and Section~\ref{thetasep} considers $\Theta$-separators, and determines the elasticity of the elements of these sets.} Section~\ref{proof} consists of the proofs of Theorem~\ref{main1} and Corollary~\ref{main2}. \blue{Effectively, all of the work that proves these two results goes into proving Theorem~\ref{main1}. We break the proof of Theorem~\ref{main1} into two lemmas depending on whether or not $X$ contains at least one element that is not contractible. The statements of these lemmas, Lemma~\ref{key1} and Lemma~\ref{key2}, provide additional structural information when $X$ is contained in a $\Theta$-separator.} Throughout the paper, the notation and terminology follows~\cite{ox11}.

\section{Preliminaries}
\label{sec: prelims}

\subsection*{Connectivity}

Let $M$ be a matroid with ground set $E$ and rank function~$r$. The \emph{connectivity function} $\lambda_M$ of $M$ is defined on all subsets $X$ of $E$ by
$$\lambda_M(X)=r(X)+r(E-X)-r(M).$$
\blue{Equivalently, $\lambda_M(X)=r(X)+r^*(X)-|X|$.} A subset $X$ of $E$ or a partition $(X, E-X)$ is \emph{$k$-separating} if $\lambda_M(X)\le k-1$ and {\em exactly $k$-separating} if $\lambda_M(X)=k-1$. A $k$-separating partition $(X, E-X)$ is a \emph{$k$-separation} if $\min \{|X|, |E-X|\}\geq k$. A matroid is \emph{$n$-connected} if it has no $k$-separations for all $k < n$.

%The following lemma, due to Bixby~\cite{Bixby}, is typically referred to as Bixby's Lemma. A $k$-separation $(X, E-X)$ is {\em minimal} if $\min\{|X|, |E-X|\}=k$.
%
%\begin{lemma}
%\label{bixby}
%Let $e$ be an element of a $3$-connected matroid $M$. Then either $M\delete e$ or $M/e$ has no non-minimal $2$-separations, in which case, $\co(M\delete e)$ or $\si(M/e)$ is $3$-connected, respectively.
%\end{lemma}

Let $e$ be an element of a $3$-connected matroid $M$. We say $e$ is {\em deletable} if $\co(M\delete e)$ is $3$-connected, and $e$ is {\em contractible} if $\si(M/e)$ is $3$-connected. Thus, $e$ is elastic if it is both deletable and contractible. 

Two $k$-separations $(X_1, Y_1)$ and $(X_2, Y_2)$ \emph{cross} if each of the intersections $X_1\cap Y_1$, $X_1\cap Y_2$, $X_2\cap Y_1$, $X_2\cap Y_2$ are non-empty. The next lemma is a standard tool for dealing with crossing separations. It is a straightforward consequence of the fact that the connectivity function $\lambda$ of a matroid $M$ is submodular, that is,
$$\lambda(X) + \lambda(Y)\ge \lambda(X\cap Y) + \lambda(X\cup Y)$$
for all $X, Y\subseteq E(M)$. An application of this lemma will be referred to as \emph{by uncrossing}.

\begin{lemma}
\label{uncrossing}
Let $M$ be a $k$-connected matroid, and let $X$ and $Y$ be $k$-separating subsets of $E(M)$.
\begin{enumerate}[{\rm (i)}]
\item If $|X\cap Y|\geq k-1$, then $X\cup Y$ is $k$-separating.

\item If $|E(M)-(X\cup Y)|\geq k-1$, then $X\cap Y$ is $k$-separating.
\end{enumerate}
\end{lemma}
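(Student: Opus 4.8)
The plan is to derive both parts from the submodularity of $\lambda_M$ together with one elementary observation about small sets in a $k$-connected matroid. First I would record the bound $\lambda_M(A)\le \min\{|A|,|E-A|\}$, valid for every $A\subseteq E(M)$ since $r(A)\le |A|$, $r(E-A)\le r(M)$, and $\lambda_M(A)=\lambda_M(E-A)$. From this and the definition of $k$-connectivity I would extract the key fact: if $A\subseteq E(M)$ satisfies $\min\{|A|,|E-A|\}\ge k-1$, then $\lambda_M(A)\ge k-1$. Indeed, were $\lambda_M(A)\le k-2$, then $(A,E-A)$ would be an $\ell$-separation with $\ell=\lambda_M(A)+1\le k-1<k$, contradicting $k$-connectivity.

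For part (i), I would start from submodularity applied to $X$ and $Y$, namely $\lambda_M(X\cap Y)+\lambda_M(X\cup Y)\le \lambda_M(X)+\lambda_M(Y)\le 2(k-1)$, using that $X$ and $Y$ are $k$-separating. I would then split into two cases according to the size of $E-(X\cup Y)$. If $|E-(X\cup Y)|\le k-2$, then the preliminary bound gives $\lambda_M(X\cup Y)\le |E-(X\cup Y)|\le k-2$ at once, so $X\cup Y$ is $k$-separating. Otherwise $|E-(X\cup Y)|\ge k-1$; since $X\cap Y\subseteq X\cup Y$ we also have $|E-(X\cap Y)|\ge k-1$, and $|X\cap Y|\ge k-1$ by hypothesis, so the key fact yields $\lambda_M(X\cap Y)\ge k-1$. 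Substituting into the submodular inequality gives $\lambda_M(X\cup Y)\le k-1$, as required.

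Part (ii) I would obtain from part (i) by complementation, avoiding a repeat of the argument. Because $\lambda_M(Z)=\lambda_M(E-Z)$ for all $Z$, the sets $E-X$ and $E-Y$ are also $k$-separating, and by De Morgan $(E-X)\cap(E-Y)=E-(X\cup Y)$ has at least $k-1$ elements by hypothesis. Applying part (i) to $E-X$ and $E-Y$ shows that $(E-X)\cup(E-Y)=E-(X\cap Y)$ is $k$-separating, whence $X\cap Y$ is $k$-separating.

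The one place that needs care — and the only real obstacle — is the degenerate situation in part (i) where $X\cup Y$ almost exhausts $E(M)$, so that $X\cap Y$ need not be ``large on both sides'' and hence can fail to satisfy $\lambda_M(X\cap Y)\ge k-1$. This is exactly why the hypothesis controls $|X\cap Y|$ (respectively $|E-(X\cup Y)|$ in part (ii)), and why the bound $\lambda_M(A)\le\min\{|A|,|E-A|\}$ is needed to dispose of the small-complement case directly. Once these size conditions are tracked, the remainder is a routine substitution into the submodular inequality.
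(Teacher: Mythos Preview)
Your proof is correct and follows the approach the paper indicates: the paper does not give a detailed proof of this lemma but simply states that it ``is a straightforward consequence of the fact that the connectivity function $\lambda$ of a matroid $M$ is submodular,'' which is exactly what you carry out. Your careful handling of the small-complement case and the complementation argument for (ii) are the standard way to fill in these details.
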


The next five lemmas are used frequently throughout the paper. The first follows from orthogonality, while the second follows from the first. The third follows from the first and second. A proof of the fourth and fifth can be found in~\cite{whi99} and~\cite{Brettell&Semple}, respectively.

\begin{lemma}
\label{orthogonality}
Let $e$ be an element of a matroid $M$, and let $X$ and $Y$ be disjoint sets whose union is $E(M)-\{e\}$. Then $e\in \cl(X)$ if and only if $e\not\in \cl^*(Y)$.
\end{lemma}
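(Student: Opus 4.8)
The plan is to reduce the biconditional to a single rank identity via the dual rank formula, and then, to recover the derivation ``from orthogonality'' that the surrounding text advertises, to give the circuit--cocircuit version as well. Throughout, recall that $e\in\cl(X)$ exactly when $r(X\cup\{e\})=r(X)$, that $\cl^*$ is the closure operator of the dual matroid $M^*$, and that the dual rank function satisfies $r^*(A)=|A|-r(M)+r(E-A)$ for all $A\subseteq E$, where $E=E(M)$.

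First I would translate the right-hand condition into the primal rank function. Since $X$, $Y$, and $\{e\}$ partition $E$, we have $E-Y=X\cup\{e\}$ and $E-(Y\cup\{e\})=X$. Applying the dual rank formula gives $r^*(Y)=|Y|-r(M)+r(X\cup\{e\})$ and $r^*(Y\cup\{e\})=|Y|+1-r(M)+r(X)$. Subtracting, the condition $e\in\cl^*(Y)$, namely $r^*(Y\cup\{e\})=r^*(Y)$, holds precisely when $r(X\cup\{e\})=r(X)+1$, that is, exactly when $e\notin\cl(X)$. Negating both sides yields the claimed equivalence $e\in\cl(X)\iff e\notin\cl^*(Y)$. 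This dispatches both directions at once and is purely computational.

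To match the orthogonality-based derivation, I would instead argue with circuits and cocircuits. For the contrapositive of one direction, suppose both $e\in\cl(X)$ and $e\in\cl^*(Y)$ held. Since $e\notin X\cup Y$, the first gives a circuit $C$ with $e\in C\subseteq X\cup\{e\}$ and the second a cocircuit $C^*$ with $e\in C^*\subseteq Y\cup\{e\}$; because $X\cap Y=\emptyset$, these meet only in $e$, so $|C\cap C^*|=1$, contradicting orthogonality. For the remaining direction, if $e\notin\cl(X)$ then $\cl(X)$ is a flat missing $e$, so it extends to a hyperplane $H$ with $X\subseteq H$ and $e\notin H$; then $E-H$ is a cocircuit containing $e$ and contained in $Y\cup\{e\}$, whence $e\in\cl^*(Y)$.

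I expect no serious obstacle, as this is a standard duality fact. The only points needing care are the bookkeeping in the dual rank formula, specifically identifying the complements $E-Y$ and $E-(Y\cup\{e\})$ with $X\cup\{e\}$ and $X$, and, in the orthogonality version, the routine existence of a hyperplane separating $e$ from a flat not containing it. I would lead with the rank computation, since it settles both implications simultaneously without any case analysis.
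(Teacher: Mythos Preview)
Your proposal is correct. The paper omits the proof entirely, merely remarking that the lemma ``follows from orthogonality''; your circuit--cocircuit argument is exactly the derivation the paper has in mind, and your dual-rank computation is a clean alternative that settles both directions at once.
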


\begin{lemma}
\label{3sep1}
Let $X$ be an exactly $3$-separating set in a $3$-connected matroid $M$, and suppose that $e\in E(M)-X$. Then $X\cup \{e\}$ is $3$-separating if and only if $e\in \cl(X)\cup \cl^*(X)$.
\end{lemma}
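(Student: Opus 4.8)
The plan is to work directly with the connectivity function, using the identity $\lambda_M(A)=r(A)+r^*(A)-|A|$ recorded in the preliminaries, and to track how each of its three terms changes when the single element $e$ is adjoined to $X$. Since $X$ is \emph{exactly} $3$-separating we have $\lambda_M(X)=2$, so the whole question reduces to computing $\lambda_M(X\cup\{e\})$ precisely and comparing it with the threshold $2$.

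First I would record that $|X\cup\{e\}|=|X|+1$, since $e\notin X$. Next, by the definition of closure, $r(X\cup\{e\})=r(X)$ precisely when $e\in\cl(X)$, and $r(X\cup\{e\})=r(X)+1$ otherwise; dually, $r^*(X\cup\{e\})=r^*(X)$ precisely when $e\in\cl^*(X)$, and $r^*(X\cup\{e\})=r^*(X)+1$ otherwise. (This corank characterization is the dual of the rank one, and is exactly where orthogonality, in the form of Lemma~\ref{orthogonality}, enters: it is the tool that lets one pass between statements about $\cl$ and $\cl^*$.) Writing $a$ for the rank jump (so $a=0$ if $e\in\cl(X)$ and $a=1$ otherwise) and $b$ for the corank jump (so $b=0$ if $e\in\cl^*(X)$ and $b=1$ otherwise), substitution into the identity gives
$$\lambda_M(X\cup\{e\})=\big(r(X)+a\big)+\big(r^*(X)+b\big)-\big(|X|+1\big)=\lambda_M(X)+a+b-1=1+a+b.$$

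It then remains only to read off the conclusion. The set $X\cup\{e\}$ is $3$-separating exactly when $\lambda_M(X\cup\{e\})\le 2$, that is, when $1+a+b\le 2$, i.e.\ $a+b\le 1$. Since $a,b\in\{0,1\}$, this fails precisely when $a=b=1$, which is the case $e\notin\cl(X)$ \emph{and} $e\notin\cl^*(X)$. Hence $X\cup\{e\}$ is $3$-separating if and only if $e\in\cl(X)\cup\cl^*(X)$, as required. There is no genuinely hard step here: the argument is a one-line computation once the two jumps are identified. The only points needing care are, first, that ``$3$-separating'' means merely $\lambda_M\le 2$, with no cardinality hypothesis (in contrast to a ``$3$-separation''), so that the arithmetic above is decisive; and second, the justification of the corank characterization $e\in\cl^*(X)\iff r^*(X\cup\{e\})=r^*(X)$, which is supplied by Lemma~\ref{orthogonality}.
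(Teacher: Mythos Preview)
Your proof is correct and follows essentially the same idea the paper intends: a direct computation of the change in $\lambda_M$ when $e$ is adjoined to $X$, using the identity $\lambda_M(A)=r(A)+r^*(A)-|A|$.

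One small remark: your invocation of Lemma~\ref{orthogonality} is misplaced. The statement ``$r^*(X\cup\{e\})=r^*(X)$ if and only if $e\in\cl^*(X)$'' is simply the definition of coclosure (i.e., the dual of the rank characterization), not a consequence of orthogonality. Lemma~\ref{orthogonality} actually says $e\in\cl(X)$ if and only if $e\notin\cl^*(E(M)-X-\{e\})$, which relates the closure of $X$ to the \emph{coclosure of the complement}, not of $X$ itself. In your formulation via $r+r^*-|\cdot|$, orthogonality is not needed at all; it is only when one works with $\lambda_M(X)=r(X)+r(E(M)-X)-r(M)$, as the paper implicitly suggests, that orthogonality is used to translate the drop $r(Y\cup\{e\})-r(Y)$ into a statement about $\cl^*(X)$. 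This does not affect the validity of your argument.
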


\begin{lemma}
Let $(X, Y)$ be an exactly $3$-separating partition of a $3$-connected matroid $M$, and suppose that $|X|\ge 3$ and $e\in X$. Then $(X-\{e\}, Y\cup \{e\})$ is exactly $3$-separating if and only if $e$ is in exactly one of $\cl(X-\{e\})\cap \cl(Y)$ and $\cl^*(X-\{e\})\cap \cl^*(Y)$.
\label{3sep2}
\end{lemma}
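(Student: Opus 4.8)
The plan is to reduce the statement to a short computation with the connectivity function and then translate the coclosure conditions into closure conditions via orthogonality. Write $X' = X - \{e\}$, so that $E(M)$ is the disjoint union of $X'$, $\{e\}$, and $Y$, and note that $E(M) - X = Y$ while $E(M) - X' = Y \cup \{e\}$. Since $(X, Y)$ is exactly $3$-separating, $\lambda_M(X) = r(X) + r(Y) - r(M) = 2$, and the partition $(X', Y \cup \{e\})$ is exactly $3$-separating precisely when $\lambda_M(X') = r(X') + r(Y \cup \{e\}) - r(M) = 2$. So the whole problem reduces to comparing $\lambda_M(X')$ with $\lambda_M(X)$.

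First I would record the two ways in which a rank can change when $e$ is moved from $X$ to $Y$. Setting $a = r(X) - r(X') \in \{0, 1\}$ and $b = r(Y \cup \{e\}) - r(Y) \in \{0, 1\}$, we have $a = 0$ exactly when $e \in \cl(X')$ and $b = 0$ exactly when $e \in \cl(Y)$. Substituting the two expressions for $\lambda$ gives $\lambda_M(X') - \lambda_M(X) = (r(X') - r(X)) + (r(Y \cup \{e\}) - r(Y)) = b - a$. Since $\lambda_M(X) = 2$, this shows $\lambda_M(X') = 2 + (b - a)$, so $(X', Y \cup \{e\})$ is exactly $3$-separating if and only if $a = b$, that is, if and only if $e$ lies in both of $\cl(X')$ and $\cl(Y)$ or in neither of them.

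It remains to see that this dichotomy is exactly the ``exactly one of'' condition in the statement. Here I would invoke Lemma~\ref{orthogonality} with the disjoint sets $X'$ and $Y$, whose union is $E(M) - \{e\}$: it yields $e \in \cl^*(Y)$ if and only if $e \notin \cl(X')$, and, symmetrically, $e \in \cl^*(X')$ if and only if $e \notin \cl(Y)$. Hence $e \in \cl^*(X') \cap \cl^*(Y)$ precisely when $e \notin \cl(Y)$ and $e \notin \cl(X')$, i.e. when $a = b = 1$, whereas $e \in \cl(X') \cap \cl(Y)$ precisely when $a = b = 0$. In particular the two sets $\cl(X') \cap \cl(Y)$ and $\cl^*(X') \cap \cl^*(Y)$ can never both contain $e$, so ``$e$ lies in exactly one of them'' is equivalent to $a = b$, which by the previous paragraph is equivalent to $(X', Y \cup \{e\})$ being exactly $3$-separating.

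The computation of $\lambda_M(X') - \lambda_M(X)$ is routine; the only point requiring care is the translation step, where orthogonality is used both to rewrite the coclosure conditions in terms of $\cl(X')$ and $\cl(Y)$ and to verify that the two candidate sets are disjoint at $e$, so that the phrase ``exactly one'' collapses to the clean rank equality $a = b$. I expect no serious obstacle beyond keeping the direction of each rank change straight.
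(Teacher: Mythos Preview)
Your argument is correct and matches the paper's intended route: the paper does not give a detailed proof but simply notes that the lemma follows from Lemma~\ref{orthogonality} and Lemma~\ref{3sep1}, and your direct $\lambda$-computation together with the orthogonality translation is exactly an unpacking of that sketch. The only cosmetic difference is that you bypass citing Lemma~\ref{3sep1} and redo its underlying rank calculation explicitly, which is harmless.
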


\begin{lemma}
\label{triangle}
Let $C^*$ be a rank-$3$ cocircuit of a $3$-connected matroid $M$. If $e\in C^*$ has the property that $\cl(C^*)-\{e\}$ contains a triangle of $M/e$, then $\si(M/e)$ is $3$-connected.
\end{lemma}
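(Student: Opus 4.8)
The plan is to prove the statement directly: assuming $M$ is $3$-connected, I would show that $M/e$ has no ``nontrivial'' $2$-separation. Recall that $\si(M/e)$ is $3$-connected precisely when $M/e$ is connected and has no vertical $2$-separation, that is, no $2$-separation $(A,B)$ with $r_{M/e}(A), r_{M/e}(B)\ge 2$; any $2$-separation whose small side has rank $1$ is a parallel class and is destroyed by simplification. Since $M$ is $3$-connected, $M/e$ is connected, so I would suppose for a contradiction that $M/e$ has a vertical $2$-separation $(A,B)$, with $r_{M/e}(A), r_{M/e}(B)\ge 2$ and $\lambda_{M/e}(A)=1$, and produce a $2$-separation of $M$.

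The geometric engine is the following observation. Since $e\in\cl(C^*)$ and $r(C^*)=3$, the set $\cl(C^*)-\{e\}$ has rank $2$ in $M/e$; that is, it is a line $L$ of $M/e$, and by hypothesis $L$ contains the triangle $T\subseteq \cl(C^*)-\{e\}$. I would first use that $T$ is a connected rank-$2$ set to move it onto one side: if an element of $T$ lies in the guts $\cl_{M/e}(A)\cap\cl_{M/e}(B)$ of the separation, it can be shifted across, so after relabelling I may assume $T\subseteq B$. I would then replace $(A,B)$ by the separation in which $B$ is taken to be closed in $M/e$; as $L=\cl_{M/e}(T)\subseteq\cl_{M/e}(B)$, this forces every element of $L$, and in particular $C^*-\{e\}$, to lie in $B$.

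With $C^*-\{e\}\subseteq B$ established, the contradiction is quick. Since $C^*$ is a cocircuit of $M$, it is a circuit of $M^*$, so $e\in\cl^*(C^*-\{e\})\subseteq\cl^*(B)$. Applying Lemma~\ref{orthogonality} with the partition $(A,B)$ of $E(M)-\{e\}$ then gives $e\notin\cl(A)$, whence $r_M(A\cup\{e\})=r_M(A)+1$. A one-line rank computation now yields $\lambda_M(A)=\lambda_{M/e}(A)=1$, so $(A,B\cup\{e\})$ is exactly $2$-separating in $M$. Because $r_{M/e}(A)\ge 2$ forces $|A|\ge 2$, and $|B\cup\{e\}|\ge|C^*|\ge 3$, this is a genuine $2$-separation of the $3$-connected matroid $M$, a contradiction.

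I expect the main obstacle to be the reduction step, namely producing a vertical $2$-separation $(A,B)$ with $B$ closed, $T\subseteq B$, and $r_{M/e}(A)\ge 2$ holding simultaneously: closing the $B$-side removes guts elements from $A$ and could a priori drop its rank below $2$ and collapse the separation. Handling this will require checking that the degenerate outcome, where $A$ becomes rank $1$, corresponds to a trivial $2$-separation already excluded, and may use Lemma~\ref{3sep2} to control which elements may be transferred across while keeping $\lambda_{M/e}=1$. Once a clean separation with $C^*-\{e\}\subseteq B$ is in hand, the orthogonality and counting steps are routine.
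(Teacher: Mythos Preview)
The paper does not give its own proof of this lemma; it simply cites Whittle~\cite{whi99}. So there is nothing to compare against directly, and I will just assess your argument.

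Your approach is correct and essentially complete. The concern you flag as the ``main obstacle'' is in fact not an obstacle at all, and you should not reach for Lemma~\ref{3sep2} (which is about exactly $3$-separating partitions, not $2$-separations). Here is why the reduction is clean. Since $M$ is $3$-connected with $|E(M)|\ge 4$, the minor $M/e$ is loopless and connected. Hence for any nonempty proper subset $S$ of $E(M/e)$ we have $\lambda_{M/e}(S)\ge 1$. Now if $(A,B)$ is an exact $2$-separation of $M/e$ and $t\in A\cap\cl_{M/e}(B)$, then
\[
\lambda_{M/e}(A-\{t\}) \;=\; r_{M/e}(A-\{t\})+r_{M/e}(B)-r(M/e)\;\le\;\lambda_{M/e}(A)=1,
\]
and since $|A|\ge r_{M/e}(A)\ge 2$ gives $A-\{t\}\neq\emptyset$, connectedness forces $\lambda_{M/e}(A-\{t\})=1$ and therefore $r_{M/e}(A-\{t\})=r_{M/e}(A)$. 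Iterating, you may replace $B$ by $\cl_{M/e}(B)$ while keeping $\lambda_{M/e}=1$ and $r_{M/e}(A')=r_{M/e}(A)\ge 2$ throughout; in particular $|A'|\ge 2$ at the end. The degenerate outcome you worry about simply does not occur.

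With that in hand, your endgame is exactly right: $L=\cl_M(C^*)-\{e\}$ is a flat of $M/e$ of rank~$2$ containing $T$, so $L=\cl_{M/e}(T)\subseteq B'$, whence $C^*-\{e\}\subseteq B'$; then $e\in\cl^*_M(C^*-\{e\})\subseteq\cl^*_M(B')$, so by Lemma~\ref{orthogonality} $e\notin\cl_M(A')$, giving $\lambda_M(A')=\lambda_{M/e}(A')=1$ and a genuine $2$-separation $(A',B'\cup\{e\})$ of $M$. That is the contradiction.
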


\begin{lemma}
\label{notmany}
Let $(X, Y)$ be a $3$-separation of a $3$-connected matroid $M$. If $X\cap \cl(Y)\neq \emptyset$ and $X\cap \cl^*(Y)\neq \emptyset$, then $|X\cap \cl(Y)|=|X\cap \cl^*(Y)|=1$.
\end{lemma}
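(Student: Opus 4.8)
The plan is to argue by duality and reduce to a small, fully‑understood configuration. First note that since $M$ is $3$‑connected with $\min\{|X|,|Y|\}\ge 3$, the partition $(X,Y)$ admits no $2$‑separation, so it is exactly $3$‑separating and $\lambda_M(X)=2$. Write $A=X\cap\cl(Y)$ and $B=X\cap\cl^*(Y)$. The first easy step is to check that $A\cap B=\emptyset$: if some $e$ lay in both, then $r(Y\cup\{e\})=r(Y)$ and $r^*(Y\cup\{e\})=r^*(Y)$, whence $\lambda_M(Y\cup\{e\})=\lambda_M(Y)-1=1$, and $(X-\{e\},Y\cup\{e\})$ would be a $2$‑separation. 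Since the hypotheses and the conclusion are invariant under replacing $M$ by $M^*$ (which interchanges $\cl$ and $\cl^*$ and preserves $\lambda$ and the separation), it suffices to prove that $B\neq\emptyset$ implies $|A|\le 1$; the bound $|B|\le 1$ then follows by duality, and with $A,B\neq\emptyset$ this gives $|A|=|B|=1$.

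So suppose $b\in B$ and, for a contradiction, $|A|\ge 2$. Each $a\in A$ lies in $\cl(Y)$, so by Lemma~\ref{3sep1} the set $Y\cup\{a\}$ is exactly $3$‑separating; moving elements of $A$ into $Y$ one at a time (each stays in $\cl(Y)$, and $b$ stays in the coclosure of the growing set) I reduce to the case $A=\{a_1,a_2\}$ while keeping $b\in B$ and $(X,Y)$ exactly $3$‑separating. Next I extract the local structure. Since $a_i\in\cl(Y)$ and $a_i\notin\cl^*(Y)$ (by disjointness), Lemma~\ref{3sep2} applied to the valid move $(X-\{a_i\},Y\cup\{a_i\})$ forces $a_i\in\cl(X-\{a_i\})$; dually, $b\in\cl^*(Y)$ gives $b\notin\cl(X-\{b\})$ by orthogonality (Lemma~\ref{orthogonality}), so $b$ is a coloop of $M|X$. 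As coloops lie in no circuit, the first fact sharpens to $a_1\in\cl(\{a_2\}\cup Z)$ and $a_2\in\cl(\{a_1\}\cup Z)$, where $Z=X-\{a_1,a_2,b\}$, and simplicity ($a_1\notin\cl(\{a_2\})$) forces $Z\neq\emptyset$. Finally, $3$‑connectivity ($b$ is not a coloop of $M$) gives $r(Y\cup Z)=r(M)$, so $b\in\cl(Y\cup Z)$.

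The crux is then to manufacture a $2$‑separation, and this splits according to whether $a_1,a_2\in\cl(Z)$. If they are, then the guts line $G=\cl(\{a_1,a_2\})$ satisfies $G\subseteq\cl(Z)\cap\cl(Y)$, so submodularity of the rank function gives $r(Z)+r(Y)-r(Z\cup Y)\ge r(G)=2$; but a direct rank count using $\lambda_M(X)=2$, $r(Z\cup Y)=r(M)$, and $r(X)=r(Z)+1$ yields $r(Z)+r(Y)-r(Z\cup Y)=1$, a contradiction. If instead $a_1,a_2\notin\cl(Z)$, then $r(X)=r(Z)+2$, and computing $\lambda_M(Z\cup\{b\})=r(Z\cup\{b\})+r(Y\cup\{a_1,a_2\})-r(M)=(r(Z)+1)+r(Y)-r(M)=1$ exhibits the $2$‑separation $(Z\cup\{b\},\,Y\cup\{a_1,a_2\})$ (both sides have at least two elements since $Z\neq\emptyset$), again contradicting $3$‑connectivity.

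I expect the main obstacle to be exactly this last step: finding the right small configuration and the right set whose connectivity drops to $1$. The difficulty is that almost every natural set one writes down ($Y\cup A$, $X-\{b\}$, $Y\cup\{a_1,b\}$, and so on) has $\lambda=2$, so the reductions alone never expose a $2$‑separation; the contradiction only surfaces after pinning $A$ down to two collinear points, recognising $b$ as a coloop of $M|X$, and separating the behaviour of the guts line $G$ relative to $Z$ into the two cases above. Keeping track of the degenerate possibilities—ensuring $Z\neq\emptyset$ and that every set involved is large enough to give a genuine separation—is the part most likely to need care.
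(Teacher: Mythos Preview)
The paper does not include a proof of this lemma; it simply cites \cite{Brettell&Semple}. So there is no in-paper argument to compare against, and your attempt should be assessed on its own merits.

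Your argument is essentially correct. The disjointness of $A$ and $B$, the duality reduction, the identification of $b$ as a coloop of $M|X$ via Lemma~\ref{orthogonality}, and the two-case rank count all go through cleanly. Two small points are worth tightening. First, the reduction ``move excess elements of $A$ into $Y$'' is harmless but actually unnecessary: nothing in the subsequent rank computations uses $A=\{a_1,a_2\}$ exactly, only that $a_1,a_2\in A$ and $b\in B$ are three distinct elements of $X$. Second, and more substantively, your case split ``$a_1,a_2\in\cl(Z)$'' versus ``$a_1,a_2\notin\cl(Z)$'' is not visibly exhaustive; you should remark that the mixed case cannot occur. This is immediate from the MacLane--Steinitz exchange property: since $a_1\in\cl(\{a_2\}\cup Z)$ and $a_2\in\cl(\{a_1\}\cup Z)$, having $a_1\in\cl(Z)$ forces $a_2\in\cl(\{a_1\}\cup Z)=\cl(Z)$, and symmetrically. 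With that one line added, the proof is complete.
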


\subsection*{Vertical connectivity}

A $k$-separation $(X, Y)$ of a matroid $M$ is \emph{vertical} if $\min\{r(X), r(Y)\}\geq k$. As noted in the introduction, we say a partition $(X, \{e\}, Y)$ of $E(M)$ is a \emph{vertical $3$-separation} of $M$ if $(X\cup \{e\}, Y)$ and $(X, Y\cup \{e\})$ are both vertical $3$-separations of $M$ and $e\in \cl(X)\cap \cl(Y)$. Furthermore, $Y\cup \{e\}$ is {\em maximal} if there is no vertical $3$-separation $(X', \{e'\}, Y')$ of $M$ such that $Y\cup \{e\}$ is a proper subset of $Y'\cup \{e'\}$. A $k$-separation $(X, Y)$ of $M$ is {\em cyclic} if both $X$ and $Y$ contain circuits. The next lemma gives a duality link between the cyclic $k$-separations and vertical $k$-separations of a $k$-connected matroid.

\begin{lemma}
Let $(X, Y)$ be a partition of the ground set of a $k$-connected matroid $M$. Then $(X, Y)$ is a cyclic $k$-separation of $M$ if and only if $(X, Y)$ is a vertical $k$-separation of $M^*$.
\label{dual}
\end{lemma}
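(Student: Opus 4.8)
The plan is to reduce the statement to the self-duality of the connectivity function together with one short rank computation. First I would record that $\lambda_{M^*}(A)=\lambda_M(A)$ for every $A\subseteq E(M)$; this is immediate from the identity $\lambda_M(A)=r(A)+r^*(A)-|A|$ noted above, since passing from $M$ to $M^*$ interchanges $r$ and $r^*$ and fixes $|A|$. Hence $(X,Y)$ is $k$-separating in $M$ if and only if it is $k$-separating in $M^*$, the size requirement $\min\{|X|,|Y|\}\ge k$ is identical on the two sides, and $M^*$ is $k$-connected because $M$ is. So it suffices to prove, assuming $(X,Y)$ is a $k$-separation of the $k$-connected matroid $M$, that both $X$ and $Y$ contain a circuit of $M$ if and only if $\min\{r_{M^*}(X),r_{M^*}(Y)\}\ge k$.

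Next I would observe that a $k$-separation of a $k$-connected matroid is automatically exactly $k$-separating: if $\lambda_M(X)=j-1<k-1$ with $j<k$, then since $\min\{|X|,|Y|\}\ge k\ge j$ the pair $(X,Y)$ would be a $j$-separation, contradicting $k$-connectivity. Writing $r^*=r_{M^*}$, the equation $\lambda_{M^*}(X)=k-1$ rearranges to the exact identity
\[
r^*(X)+r^*(Y)=r(M^*)+k-1 .
\]
I expect this exactness to be the one genuinely load-bearing point: it is what forces the equivalence in both directions, and it is exactly where $k$-connectivity is used.

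The remaining ingredient is to translate ``$X$ contains a circuit of $M$'' into a rank condition in $M^*$. Since the circuits of $M$ are the cocircuits of $M^*$, and a set contains a cocircuit of a matroid precisely when its complement is non-spanning, this says $r^*(Y)<r(M^*)$; equivalently, one can obtain this directly from the corank formula $r^*(Y)=|Y|+r(X)-r(M)$ and $r(M^*)=|E(M)|-r(M)$, which reduce $r^*(Y)<r(M^*)$ to $r(X)<|X|$, that is, to $X$ being dependent. Symmetrically, $Y$ contains a circuit of $M$ if and only if $r^*(X)<r(M^*)$.

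Finally I would combine these using the exact identity. From it, $r^*(Y)<r(M^*)$ holds if and only if $r^*(X)=r(M^*)+k-1-r^*(Y)\ge k$, and symmetrically $r^*(X)<r(M^*)$ holds if and only if $r^*(Y)\ge k$. Thus ``$X$ contains a circuit of $M$'' is equivalent to $r^*(X)\ge k$ and ``$Y$ contains a circuit of $M$'' is equivalent to $r^*(Y)\ge k$. Therefore both $X$ and $Y$ contain circuits of $M$ exactly when $\min\{r^*(X),r^*(Y)\}\ge k$; that is, $(X,Y)$ is a cyclic $k$-separation of $M$ if and only if it is a vertical $k$-separation of $M^*$, as required.
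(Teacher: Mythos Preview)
Your proof is correct and follows essentially the same route as the paper's: both use self-duality of $\lambda$, invoke $k$-connectivity to get exact $k$-separation, and then convert ``$X$ contains a circuit'' into the inequality $r^*(X)\ge k$ via the corank formula $r^*(X)=(k-1)+|X|-r(X)$ together with $|X|-r(X)\ge 1$ for dependent $X$. The only cosmetic difference is that you route the last step through the identity $r^*(X)+r^*(Y)=r(M^*)+k-1$ and treat both directions explicitly, whereas the paper computes $r^*(X)$ directly and leaves the converse to a ``similar argument''.
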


\begin{proof}
Suppose that $(X, Y)$ is a cyclic $k$-separation of $M$. Then $(X, Y)$ is a $k$-separation of $M^*$. Since $(X, Y)$ is a $k$-separation of a $k$-connected matroid, $(X, Y)$ is exactly $k$-separating, and so $r(X)+r(Y)-r(M)=k-1$. Therefore, as $r^*(X)=r(Y)+|X|-r(M)$, it follows that
$$r^*(X)=((k-1)-r(X)+r(M))+|X|-r(M)=(k-1)+|X|-r(X).$$
As $X$ contains a circuit, $X$ is dependent, so $|X|-r(M)\ge 1$. Hence $r^*(X)\ge k$. By symmetry, $r^*(Y)\ge k$, and so $(X, Y)$ is a vertical $k$-separation of $M^*$. A similar argument establishes the converse.
\end{proof}

Following Lemma~\ref{dual}, we say a partition $(X, \{e\}, Y)$ of the ground set of a $3$-connected matroid $M$ is a \emph{cyclic $3$-separation} if $(X, \{e\}, Y)$ is a vertical $3$-separation of $M^*$.

%Thus, $(X, \{e\}, Y)$ is a cyclic $3$-separation if both $(X\cup \{e\}, Y)$ and $(X, Y\cup \{e\})$ are $3$-separations, $r^*(X), r^*(Y)\geq 3$ and $e\in \cl^*(X)\cap \cl^*(Y)$.

Of the next two results, the first combines Lemma~\ref{dual} with a straightforward strengthening of~ \cite[Lemma 3.1]{Oxley et al} and, in combination with Lemma~\ref{dual}, the second follows easily from Lemma~\ref{3sep2}.

\begin{lemma}
\label{vertical1}
Let $M$ be a $3$-connected matroid, and suppose that $e\in E(M)$. Then $\si(M/e)$ is not $3$-connected if and only if $M$ has a vertical $3$-separation $(X, \{e\}, Y)$. Dually, $\co(M\delete e)$ is not $3$-connected if and only if $M$ has a cyclic $3$-separation $(X, \{e\}, Y)$.
\end{lemma}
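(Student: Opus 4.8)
The plan is to prove the first biconditional directly by translating between the connectivity function of $M$ and that of $M/e$, and then to deduce the second (dual) statement from the first using Lemma~\ref{dual}. Throughout I would use that, since $M$ is $3$-connected with $|E(M)|\ge 4$, it is simple, so $e$ is neither a loop nor parallel to another element; in particular $M/e$ has no loops, and $\si(M/e)$ is obtained from $M/e$ by keeping one element from each parallel class.

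For the forward implication, suppose $(X,\{e\},Y)$ is a vertical $3$-separation of $M$. Since $e\in\cl(X)\cap\cl(Y)$, I would compute $r_{M/e}(X)=r_M(X)-1$ and $r_{M/e}(Y)=r_M(Y)-1$, whence $\lambda_{M/e}(X)=\lambda_M(X)-1=1$, using that $(X,Y\cup\{e\})$ is exactly $3$-separating in the $3$-connected matroid $M$. Because the separation is vertical, $r_{M/e}(X)=r_M(X)-1\ge 2$ and likewise $r_{M/e}(Y)\ge 2$, so each side of the $2$-separation $(X,Y)$ of $M/e$ spans at least two points and therefore survives simplification; hence $\si(M/e)$ is not $3$-connected.

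For the converse, suppose $\si(M/e)$ is not $3$-connected. Since $M/e$ is connected and loopless, $\si(M/e)$ has no $1$-separation, so it has a $2$-separation; pulling this back through the parallel classes yields a $2$-separation $(A,B)$ of $M/e$ with $r_{M/e}(A),r_{M/e}(B)\ge 2$. Writing $a=r_M(A\cup\{e\})-r_M(A)\in\{0,1\}$ and $b=r_M(B\cup\{e\})-r_M(B)\in\{0,1\}$, the identity $\lambda_{M/e}(A)=1$ rearranges to $r_M(A\cup\{e\})+r_M(B\cup\{e\})-r(M)=2$, from which I would derive $\lambda_M(A)=2-a$ and $\lambda_M(A\cup\{e\})=2-b$. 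Since $M$ is $3$-connected and both sides of each of these two partitions have at least two elements, $\lambda_M\ge 2$ forces $a=b=0$, that is, $e\in\cl(A)\cap\cl(B)$; the ranks $r_M(A),r_M(B)\ge 3$ then follow from $r_{M/e}(A),r_{M/e}(B)\ge 2$, so $(A,\{e\},B)$ is a vertical $3$-separation of $M$.

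Finally, for the dual statement I would apply the first statement to $M^*$: using $\co(M\delete e)=(\si(M^*/e))^*$ together with the fact that a matroid is $3$-connected if and only if its dual is, $\co(M\delete e)$ is not $3$-connected if and only if $\si(M^*/e)$ is not $3$-connected, which by the first part holds if and only if $M^*$ has a vertical $3$-separation $(X,\{e\},Y)$; by Lemma~\ref{dual} and the definition of a cyclic $3$-separation, this is precisely a cyclic $3$-separation of $M$. The main obstacle I anticipate is bookkeeping in the converse, namely correctly lifting the $2$-separation of $\si(M/e)$ to one of $M/e$ whose two sides each have rank at least two, since this rank condition is exactly what the $3$-connectivity of $M$ needs in order to force $e$ into both $\cl(A)$ and $\cl(B)$.
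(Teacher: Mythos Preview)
Your argument is correct and self-contained. The paper does not supply its own proof of this lemma, instead remarking that it ``combines Lemma~\ref{dual} with a straightforward strengthening of~\cite[Lemma~3.1]{Oxley et al}''; your direct connectivity-function computation for $M/e$, together with the duality step via Lemma~\ref{dual}, is exactly the kind of argument that citation points to.
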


\begin{lemma}
\label{vertical2}
Let $M$ be a $3$-connected matroid. If $(X, \{e\}, Y)$ is a vertical $3$-separation of $M$, then $(X -\cl(Y), \{e\}, \cl(Y)-e)$ is also a vertical $3$-separation of $M$. Dually, if $(X, \{e\}, Y)$ is a cyclic $3$-separation of $M$, then $(X-\cl^*(Y), \{e\}, \cl^*(Y)-\{e\})$ is also a cyclic $3$-separation of $M$.
\end{lemma}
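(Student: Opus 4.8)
The plan is to prove the first (vertical) statement directly; the cyclic statement then follows by applying it to $M^*$ and invoking Lemma~\ref{dual}, using that $\cl^*$ and $r^*$ are the closure operator and rank function of $M^*$. So suppose $(X, \{e\}, Y)$ is a vertical $3$-separation of $M$. Since $e\in \cl(Y)$, we have $\cl(Y)=Y\cup \{e\}\cup (X\cap \cl(Y))$; set $X'=E(M)-\cl(Y)$ and $Y'=\cl(Y)-e$, so that $X'=X-\cl(Y)$, $Y'\cup \{e\}=\cl(Y)$, and $(X', \{e\}, Y')$ is the partition obtained from $(X, \{e\}, Y)$ by moving the elements of $X\cap \cl(Y)$ from the $X$-side to the $Y$-side. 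I must check that $(X'\cup \{e\}, Y')$ and $(X', Y'\cup \{e\})$ are vertical $3$-separations and that $e\in \cl(X')\cap \cl(Y')$.

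The conceptual heart of the argument is to move the elements of $X\cap \cl(Y)$ across the separation one at a time, preserving exact $3$-separation via Lemma~\ref{3sep2}. Starting from the exactly $3$-separating partition $(X, Y\cup \{e\})$, suppose we have reached an exactly $3$-separating partition $(\tilde X, \tilde Y)$ with $X'\subseteq \tilde X\subseteq X$ and $Y\cup \{e\}\subseteq \tilde Y\subseteq \cl(Y)$, and pick $f\in \tilde X\cap \cl(Y)$. Since $Y\subseteq \tilde Y\subseteq \cl(Y)$ we have $\cl(\tilde Y)=\cl(Y)$, so $f\in \cl(\tilde Y)$, and Lemma~\ref{orthogonality} gives $f\notin \cl^*(\tilde X-f)$. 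Provided also $f\in \cl(\tilde X-f)$, Lemma~\ref{orthogonality} gives $f\notin \cl^*(\tilde Y)$, so $f$ lies in exactly one of $\cl(\tilde X-f)\cap \cl(\tilde Y)$ and $\cl^*(\tilde X-f)\cap \cl^*(\tilde Y)$, and Lemma~\ref{3sep2} shows that $(\tilde X-f, \tilde Y\cup \{f\})$ is again exactly $3$-separating. Thus everything reduces to verifying that each moved element remains spanned by what is left of the $X$-side, that is, that $r(X')=r(X)$.

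I expect this rank equality to be the main obstacle, and I would obtain it from the symmetry of the connectivity function. As $X'=E(M)-\cl(Y)$ and $\cl(Y)=Y'\cup \{e\}$, symmetry gives $\lambda(X')=\lambda(\cl(Y))=r(Y)+r(X')-r(M)$, whereas $\lambda(X)=r(X)+r(Y)-r(M)=2$ because $(X, Y\cup \{e\})$ is exactly $3$-separating and $r(Y\cup \{e\})=r(Y)$. Since $X'\subseteq X$, this yields $\lambda(X')\le 2$, and hence $\lambda(X')=2$ and $r(X')=r(X)$ once $(X', E(M)-X')$ is known not to be a $1$- or $2$-separation. Ruling out the degenerate cases is the one place needing care: if $X'=\emptyset$ then $X\cup \{e\}\subseteq \cl(Y)$ forces $\lambda(Y)=r(X\cup \{e\})\ge 3$, contradicting $\lambda(Y)=2$; and if $|X'|=1$ then, as $M$ has no loops or coloops, $\lambda(X')=1$ forces $r(Y)=r(M)$, so $\cl(Y)=E(M)$ and $X'=\emptyset$, a contradiction. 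With $r(X')=r(X)$ established, every element of $X\cap \cl(Y)$ lies in $\cl(X')$, so the one-at-a-time moves are valid and $\cl(X')=\cl(X)\ni e$; since $\cl(Y')\supseteq \cl(Y)\ni e$ as well, we get $e\in \cl(X')\cap \cl(Y')$. Finally, $r(X'\cup \{e\})=r(X')=r(X)\ge 3$ and $r(Y')=r(Y)\ge 3$ by the verticality of the original separation, and a computation like the one above gives $\lambda(X'\cup \{e\})=\lambda(X)=2$; together with $\lambda(X')=2$ this shows both $(X'\cup \{e\}, Y')$ and $(X', Y'\cup \{e\})$ are vertical $3$-separations, completing the proof.
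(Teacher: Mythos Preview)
Your proof is correct and follows the approach the paper indicates (the paper simply states the result ``follows easily from Lemma~\ref{3sep2}'' without giving details). Your careful verification that $r(X')=r(X)$---via ruling out $|X'|\le 1$ and then using $3$-connectivity to force $\lambda(X')=2$---is exactly the ingredient needed to make the one-element-at-a-time application of Lemma~\ref{3sep2} work; note, though, that once you have $\lambda(X')=2$ and $r(X')=r(X)$ directly, the iterated use of Lemma~\ref{3sep2} becomes redundant, since your final paragraph already computes $\lambda(X')$ and $\lambda(X'\cup\{e\})$ outright.
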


\noindent Note that an immediate consequence of Lemma~\ref{vertical2} is that if $(X, \{e\}, Y)$ is a vertical $3$-separation such that $Y\cup \{e\}$ is maximal, then $Y\cup \{e\}$ must be closed. We will make repeated use of this fact.

\section{Fans}
\label{fansplus}

Let $M$ be a $3$-connected matroid. A subset $F$ of $E(M)$ with at least three elements is a \emph{fan} if there is an ordering $(f_1, f_2, \ldots, f_k)$ of $F$ such that
\begin{enumerate}[(i)]
\item for all $i\in \{1, 2, \ldots, k-2\}$, the triple $\{f_i, f_{i+1}, f_{i+2}\}$ is either a triangle or a triad, and

\item for all $i\in \{1, 2, \ldots, k-3\}$, if $\{f_i, f_{i+1}, f_{i+2}\}$ is a triangle, then $\{f_{i+1}, f_{i+2}, f_{i+3}\}$ is a triad, while if $\{f_i, f_{i+1}, f_{i+2}\}$ is a triad, then $\{f_{i+1}, f_{i+2}, f_{i+3}\}$ is a triangle.
\end{enumerate}
If $k\geq 4$, then the elements $f_1$ and $f_k$ are the \emph{ends} of $F$. Furthermore, if $\{f_1, f_2, f_3\}$ is a triangle, then $f_1$ is a {\em spoke-end}; otherwise, $f_1$ is a {\em rim-end}. Observe that if $F$ is a $4$-element fan $(f_1, f_2, f_3, f_4)$, then either $f_1$ or $f_4$ is the unique spoke-end of $F$ depending on whether $\{f_1, f_2, f_3\}$ or $\{f_2, f_3, f_4\}$ is a triangle, respectively. The proof of the next lemma is straightforward and omitted.

%while $f_2, f_3, \ldots, f_{k-1}$ are the \emph{internal} elements of $F$.

%Let $F$ be a fan $(f_1, f_2, \ldots, f_k)$, where $k\ge 5$, and let $i\in \{1, 2, \ldots, k\}$. An element $f_i$ is a \emph{spoke element} of $F$ if $\{f_1, f_2, f_3\}$ is a triangle and $i$ is odd, or if $\{f_1, f_2, f_3\}$ is a triad and $i$ is even; otherwise it is a \emph{rim element}. \marginpar{Redundant spoke/rim def?}If $F$ is a $4$-element fan with ordering $(f_1, f_2, f_3, f_4)$, then $f_1$ is a {\em spoke element} of $F$ if $\{f_1, f_2, f_3\}$ is a triangle; otherwise, it is a {\em rim element}. Furthermore, $f_4$ is a {\em spoke element} if $\{f_1, f_2, f_3\}$ is a triad; otherwise it is a {\em rim element}.

%The proof of the next lemma is straightforward.
%
%\begin{lemma}
%\label{fan ends non-elastic}
%Let $M$ be a $3$-connected matroid with at least seven elements. Suppose $M$ has a fan $F$ of at least four elements, and let $f$ be an end of $F$.
%\begin{enumerate}[{\rm (i)}]
%%[label=\text{\emph{(\roman*)}},leftmargin=*,align=left]
%\item If $f$ is a spoke element, then $\co(M\delete f)$ is $3$-connected, and $\si(M/f)$ is not $3$-connected.
%\item If $f$ is a rim element, then $\si(M/f)$ is $3$-connected, and $\co(M\delete f)$ is not $3$-connected.
%\end{enumerate}
%\end{lemma}
%It immediately follows from Lemma~\ref{fan ends non-elastic} that, in a $3$-connected matroid with at least seven elements, the ends of a $4$-element fan are non-elastic. The next straightforward lemma explicitly gives the connection between fans and vertical $3$-separations with one side maximal.

\begin{lemma}
\label{fans & seps}
Let $M$ be a $3$-connected matroid, and suppose that $F=(f_1, f_2, f_3, f_4)$ is a $4$-element fan of $M$ with spoke-end $f_1$. Then $(\{f_2, f_3, f_4\}, \{f_1\}, E(M)-F)$ is a vertical $3$-separation of $M$ provided $r(M)\ge 4$, in which case, $E(M)-\{f_2, f_3, f_4\}$ is maximal.
\end{lemma}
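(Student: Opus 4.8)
The plan is to set $X=\{f_2,f_3,f_4\}$, $e=f_1$, and $Y=E(M)-F$, and to verify directly the three defining conditions of a vertical $3$-separation: that $(X\cup\{e\},Y)$ and $(X,Y\cup\{e\})$ are each vertical $3$-separations and that $e\in\cl(X)\cap\cl(Y)$. Since $f_1$ is the spoke-end, $\{f_1,f_2,f_3\}$ is a triangle and, by part (ii) of the definition of a fan, $\{f_2,f_3,f_4\}$ is a triad. As $r(M)\ge 4$, both $M$ and $M^*$ are $3$-connected with at least four elements, hence simple and cosimple. First I would record the rank data this forces. A triad of a $3$-connected matroid is independent: if $X=\{f_2,f_3,f_4\}$ were dependent then, by simplicity, it would also be a triangle, giving $\lambda(X)=r(X)+r^*(X)-|X|=2+2-3=1$ and contradicting $3$-connectivity. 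Thus $r(X)=3$ and, as $r^*(X)=2$, we get $\lambda(X)=2$. The same argument applied in $M^*$ to the triad $\{f_1,f_2,f_3\}$ of $M^*$ gives $r^*(\{f_1,f_2,f_3\})=3$.

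The crux is establishing $e=f_1\in\cl(X)\cap\cl(Y)$, and I expect the containment $f_1\in\cl(Y)$ to be the main obstacle. The containment $f_1\in\cl(X)$ is immediate, since $\{f_1,f_2,f_3\}$ is a triangle and so $f_1\in\cl(\{f_2,f_3\})\subseteq\cl(X)$. For $f_1\in\cl(Y)$ I would invoke Lemma~\ref{orthogonality} with its two disjoint sets taken to be $Y$ and $X$, whose union is $E(M)-\{f_1\}$: it then suffices to show $f_1\notin\cl^*(X)=\cl^*(\{f_2,f_3,f_4\})$. But $f_1\in\cl^*(\{f_2,f_3,f_4\})$ would force $r^*(F)=r^*(\{f_2,f_3,f_4\})=2$, whereas $\{f_1,f_2,f_3\}\subseteq F$ is an independent triad of $M^*$, so $r^*(F)\ge 3$. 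This contradiction yields $f_1\in\cl(Y)$.

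With $f_1\in\cl(Y)$ in hand, the remaining verifications are bookkeeping. From $\lambda(X)=2$ I obtain $r(E-X)=r(M)-1\ge 3$ and $|E-X|\ge r(E-X)\ge 3$, so $(X,Y\cup\{e\})=(X,E-X)$ is a vertical $3$-separation. Since $f_1\in\cl(Y)$, we have $r(Y)=r(Y\cup\{f_1\})=r(E-X)=r(M)-1$, and combining this with $r(F)=3$ gives $\lambda(F)=3+(r(M)-1)-r(M)=2$; together with $r(F)=3$, $r(Y)=r(M)-1\ge 3$, and $|Y|\ge 3$, this shows $(X\cup\{e\},Y)=(F,E-F)$ is also a vertical $3$-separation. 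Hence $(X,\{e\},Y)$ is a vertical $3$-separation. Finally, for maximality of $Y\cup\{e\}=E(M)-X$, suppose some vertical $3$-separation $(X',\{e'\},Y')$ satisfied $Y\cup\{e\}\subsetneq Y'\cup\{e'\}$. Then $X'=E(M)-(Y'\cup\{e'\})$ is a proper subset of $X=\{f_2,f_3,f_4\}$, so $|X'|\le 2$ and $r(X')\le 2$, contradicting the requirement $r(X')\ge 3$ for $(X',Y'\cup\{e'\})$ to be a vertical $3$-separation. Thus $E(M)-\{f_2,f_3,f_4\}$ is maximal.
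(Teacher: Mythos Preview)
Your proof is correct. The paper omits the proof entirely, calling it ``straightforward,'' and your direct verification of the three defining conditions (together with the obvious maximality argument) is precisely the kind of argument the authors have in mind; there is nothing substantive to compare.
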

%note: revisions follow

\orange{We end this section by determining when an element in a fan of size at least four is elastic. For subsets $X$ and $Y$ of a matroid, the {\em local connectivity} between $X$ and $Y$, denoted $\sqcap(X, Y)$, is defined by
$$\sqcap(X, Y)=r(X)+r(Y)-r(X\cup Y).$$
Let $M$ be a $3$-connected matroid and let $k$ be a positive integer. A {\em flower} $\Phi$ of $M$ is an (ordered) partition $(P_1, P_2, \ldots, P_k)$ of $E(M)$ such that each $P_i$ has at least two elements and is $3$-separating, and each $P_i\cup P_{i+1}$ is $3$-separating, where all subscripts are interpreted modulo $k$. If $k\ge 4$, we say $\Phi$ is {\em swirl-like} if $\bigcup_{i\in I} P_i$ is exactly $3$-separating for all proper subsets $I$ of $\{1, 2, \ldots, k\}$ whose members form a consecutive set in the cyclic order $(1, 2, \ldots, k)$, and
$$\sqcap(P_i, P_j)=
\begin{cases}
1, & \mbox{if $P_i$ and $P_j$ are consecutive}; \\
0, & \mbox{if $P_i$ and $P_j$ are not consecutive}
\end{cases}
$$
for all distinct $i, j\in \{1, 2, \ldots, k\}$. For further details of swirl-like flowers and, more generally flowers, we refer the reader to~\cite{oxl04}.}

\begin{lemma}
\label{elastic fans}
Let $M$ be a $3$-connected matroid such that $r(M),r^*(M)\geq 4$, and let $F=(f_1, f_2, \ldots, f_n)$ be a maximal fan of $M$.
\begin{enumerate}[{\rm (i)}]
\item If $n\ge 6$, then $F$ contains no elastic elements of $M$.
\item If $n=5$, then $F$ contains either exactly one elastic element, namely $f_3$, or no elastic elements of $M$.
\item If $n=4$, then $F$ contains either exactly two elastic elements, namely $f_2$ and $f_3$, or no elastic elements of $M$.
\end{enumerate}
Moreover, if $n\in \{4, 5\}$ and $F$ contains no elastic elements, then, \blue{up to duality,} $M$ has a swirl-like flower $(A, \{f_1, f_2\}, F-\{f_1, f_2\}, B)$ as shown geometrically in Fig.~\ref{fig: flower}, \blue{or $n=5$ and there is an element $g$ such that $M|(F\cup\{g\})\cong M(K_4)$.}
\end{lemma}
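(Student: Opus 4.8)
The plan is to analyze the fan $F=(f_1, f_2, \ldots, f_n)$ element by element, using the alternating triangle/triad structure together with Lemma~\ref{vertical1} to decide deletability and contractibility of each $f_i$. Recall that an element $e$ is \emph{contractible} precisely when $M$ has no vertical $3$-separation $(X, \{e\}, Y)$, and \emph{deletable} precisely when $M$ has no cyclic $3$-separation $(X, \{e\}, Y)$; by duality I need only understand one of these and dualise. The key structural observation is that a fan element sitting in a triangle $\{f_{i}, f_{i+1}, f_{i+2}\}$ has that triangle as a witness to a vertical $3$-separation unless it sits near an end, and similarly triads witness cyclic $3$-separations. So first I would catalogue, for each position $i$, which of the two obstructions is forced by the local fan structure, paying attention to whether $f_i$ is an end, adjacent to an end, or interior.

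For the three enumerated cases I expect the following. When $n\ge 6$, every element $f_i$ lies in both a triangle and a triad of the fan that are ``far enough'' from the ends that each extends (via Lemma~\ref{3sep1} and the closure/coclosure membership of the remaining fan elements) to a genuine vertical and a genuine cyclic $3$-separation, so no $f_i$ is elastic. For $n=5$ the fan is, up to duality, $(f_1, f_2, f_3, f_4, f_5)$ with $\{f_1,f_2,f_3\},\{f_3,f_4,f_5\}$ triangles and $\{f_2,f_3,f_4\}$ a triad (or the dual); the symmetry about $f_3$ makes $f_3$ the only candidate, and I would check directly that $f_3$ is simultaneously deletable and contractible exactly when the flower surrounding $\{f_1,f_2\}$ and $\{f_4,f_5\}$ fails to be swirl-like. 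For $n=4$, using Lemma~\ref{fans & seps} to place the canonical vertical $3$-separation, the interior pair $f_2, f_3$ are the only candidates and I would verify they are elastic unless the exceptional flower structure appears.

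The ``Moreover'' clause is where the real work lies, and I expect it to be the main obstacle. Here I assume $n\in\{4,5\}$ and that $F$ contains no elastic element, and I must produce, up to duality, either the swirl-like flower $(A, \{f_1, f_2\}, F-\{f_1, f_2\}, B)$ of Fig.~\ref{fig: flower} or the $M(K_4)$-restriction when $n=5$. The strategy is to start from the maximal vertical $3$-separation guaranteed by the fan together with Lemma~\ref{vertical2} (so the relevant side is closed), and then use uncrossing (Lemma~\ref{uncrossing}) to build a four-petal partition. The failure of some $f_i$ to be elastic supplies a crossing separation on the ``other'' side of the fan; uncrossing this with the fan separation forces the ground set outside $F$ to split into two $3$-separating pieces $A$ and $B$. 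I would then compute the local connectivities $\sqcap(P_i, P_j)$ between the four petals $A$, $\{f_1,f_2\}$, $F-\{f_1,f_2\}$, $B$ using the fan's triangle/triad data and submodularity, checking that consecutive petals have local connectivity $1$ and non-consecutive petals have local connectivity $0$, which is exactly the swirl-like condition. The delicate point is the degenerate case: when the outside splits so that one petal has rank too small, the flower collapses and instead one recovers an extra element $g$ completing $F$ to an $M(K_4)$; isolating precisely when this alternative occurs, rather than the clean four-petal flower, is the subtle part of the argument and will require careful bookkeeping of ranks and of the coclosure of $F$.
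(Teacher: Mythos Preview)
Your overall shape is right --- analyse fan positions via Lemma~\ref{vertical1}, then build the flower by uncrossing --- but there are two concrete gaps.

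First, part~(iii) asserts a dichotomy: \emph{exactly} two elastic elements or none. Your plan ``verify $f_2,f_3$ are elastic unless the flower appears'' does not explain why ``exactly one of $f_2,f_3$ elastic'' is impossible. The paper handles this with a short sublemma: assuming $\{f_1,f_2,f_3\}$ is a triangle, if $f_3$ is contractible then $f_3$ is automatically elastic (for $n=4$) unless $f_2$ fails to be contractible. The point is that the unique triad through $f_3$ is $\{f_2,f_3,f_4\}$, so $\co(M\backslash f_3)\cong M/f_2\backslash f_3\cong\si(M/f_2)$, linking deletability of $f_3$ to contractibility of $f_2$. Combined with the observation that $(f_1,f_3,f_2,f_4)$ is also a fan ordering (so the roles of $f_2$ and $f_3$ are symmetric), one reduces cleanly to the case where $f_3$ is \emph{not} contractible, and the flower built from that specific vertical $3$-separation then also certifies that $f_2$ is not deletable. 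Without this linkage you cannot rule out the ``one elastic'' case.

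Second, your account of the $M(K_4)$ alternative is off. It does not arise from a degenerate flower with a petal of small rank; it arises \emph{before} any flower is built, inside the same sublemma, in the $n=5$ case when $f_3$ is contractible but $\co(M\backslash f_3)$ is not $3$-connected. Since $\{f_2,f_3,f_4\}$ is the unique triad through $f_3$, one has $\co(M\backslash f_3)\cong M/f_2\backslash f_3$, and this fails $3$-connectivity precisely when some $g$ makes $\{f_2,f_4,g\}$ a triangle, giving $M|(F\cup\{g\})\cong M(K_4)$. The flower construction itself (once you know $f_3$ is not contractible and have the vertical $3$-separation $(A\cup\{f_1,f_2\},\{f_3\},B\cup\{f_4\})$) proceeds by uncrossing and then invoking \cite[Theorem~4.1]{oxl04} to get $\sqcap(A,\{f_1,f_2\})=\sqcap(\{f_3,f_4\},B)=\sqcap(A,B)=1$ from $\sqcap(\{f_1,f_2\},\{f_3,f_4\})=1$; only the two non-consecutive $\sqcap=0$ conditions need direct verification. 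As a minor point, for part~(i) the paper's route is much shorter than yours: every element of a fan with $n\ge 6$ is the \emph{end} of some $4$-element subfan, and Lemma~\ref{fans & seps} (plus its dual) kills elasticity of ends immediately.
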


\begin{proof}
It follows by Lemma~\ref{fans & seps} that the ends of a $4$-element fan in $M$ are not elastic. Thus, if $n\ge 6$, then, as every element of $F$ is the end of a $4$-element fan, $F$ contains no elastic elements, and if $n=5$, then, as every element of $F$, except $f_3$, is the end of a $4$-element fan, $F$ contains no elastic elements except possibly $f_3$. Thus (i) and (ii) hold, \blue{and we assume that $n\in \{4, 5\}$}. \blue{By applying the dual argument if needed, we may also assume that $\{f_1, f_2, f_3\}$ is a triangle.}

%We first observe the following:
%\begin{sublemma}\label{fans}
%%	Let $\{i,j\}=\{1,2\}$. If $f_i$ is not deletable and $f_j$ is contractible, then there is an element $f_k$ such that $\{f_4,f_j,f_k\}$ is a triangle. Otherwise, if $f_i$ is not contractible, then $f_j$ is not deletable.
%Let $\{i,j\}=\{1,2\}$. If $f_i$ is not deletable and $f_j$ is contractible, then there is an element $f_k$ such that $\{f_4,f_j,f_k\}$ is a triangle.% Otherwise, if $f_i$ is deletable, then $f_j$ is contractible.
%\end{sublemma}
%Suppose that $f_i$ is not deletable. Then by Lemma~\ref{vertical1}, $M$ has a cyclic $3$-separation
%$$(U\cup \{f_j, f_4\}, \{f_i\}, V\cup \{f_1\})$$
%where $|U|\ge 1$ and $|V|\ge 2$. As $f_j\in \cl(V\cup\{f_1,f_i\})$, we have that $V\cup\{f_1,f_i\}$ is $2$-separating in $M/f_j$. If $f_j$ is contractible, then it must be that $U\cup\{f_j,f_4\}$ is a segment in $M$. By orthogonality with the triad $\{f_2,f_3,f_4\}$, we deduce that $U\cup\{f_j,f_4\}$ is a triangle. Thus, (\ref{fans}) holds.} %Similarly, if $f_i$ is not contractible, then by Lemma~\ref{vertical1},
%%$$(A\cup \{f_j, f_1\}, \{f_i\}, B\cup \{f_4\})$$
%%is a vertical $3$-separation of $M$, where $|A|\ge 1$ and $|B|\ge 2$. As $f_j\in \cl^*(B\cup\{f_4,f_i\})$, then if $f_j$ is deletable, it must be that $A\cup\{f_j,f_1\}$ is a triad. But then $(f_0,f_1,f_2,f_3,f_4)$ is a fan, a contradiction to the maximality of $F$. Thus, (\ref{fans}) holds.}

\begin{sublemma}
\blue{If $f_3$ is contractible, then $f_3$ is elastic unless $n=5$ and there is an element $g$ such that $M|(F\cup \{g\})\cong M(K_4)$, or $n=4$ and $f_2$ is not contractible.}
\label{littlefan}
\end{sublemma}

\blue{Suppose that $f_3$ is contractible. If $f_3$ is not elastic, then $\co(M\delete f_3)$ is not $3$-connected. First assume that $n=5$. Then, as $f_2$ is the end of a $4$-element fan, $\co(M\delete f_2)$ is not $3$-connected, and so, by Bixby's Lemma, $\si(M/f_2)$ is $3$-connected. By orthogonality, $\{f_2, f_3, f_4\}$ is the unique triad containing $f_3$, and so $\co(M\delete f_3)\cong M/f_2\delete f_3$. But then $\co(M\delete f_3)$ is $3$-connected unless there is an element $g$ such that $\{f_2, f_4, g\}$ is a triangle of $M$, in which case $M|(F\cup \{g\})\cong M(K_4)$. Now assume that $n=4$. If $f_3$ is contained in a triad $T^*$ other than $\{f_2, f_3, f_4\}$, then, by orthogonality, either $f_1$ or $f_2$ is contained in $T^*$. If $f_1\in T^*$, then $F$ is not maximal, a contradiction. Thus $f_2\in T^*$. But then $T^*\cup \{f_4\}$ has corank~$2$ and so, as $M$ is $3$-connected, $(T^*\cup \{f_4\})-\{f_2\}$ is a triad, contradicting orthogonality. Thus, as $F$ is maximal, $\{f_2, f_3, f_4\}$ is the unique triad containing $f_3$. Hence $\co(M\delete f_3)\cong M/f_2\delete f_3$. Thus $\co(M\delete f_3)\cong \si(M/f_2)$ and so, as $\co(M\delete f_3)$ is not $3$-connected, $f_2$ is not contractible. This completes the proof of (\ref{littlefan}).}

%\blue{Now suppose that both $f_2$ and $f_3$ are contractible. If precisely one of these elements is elastic, then the other is non-deletable, and it follows (\ref{fans}) that there is an element $f_5$ for which $(f_1,f_2,f_3,f_4,f_5)$ is a $5$-element fan. Otherwise, by two applications of (\ref{fans}), there are elements $f_5$ and $f_5'$ such that $\{f_3,f_4,f_5\}$ and $\{f_2,f_4,f_5'\}$ are triangles. By orthogonality with these triangles, $F$ is either $(f_1,f_2,f_3,f_4,f_5)$ or $(f_1,f_2,f_3,f_4,f_5')$ and one sees that $M|\{f_1,f_2,f_3,f_4,f_5,f_5'\}\cong M(\cW_3)$. For the remainder of the proof we may assume that $f_3$ is not contractible.}

Since $(f_1, f_3, f_2, f_4)$ is also a fan ordering for $F$ \blue{if $n=4$, it follows by (\ref{littlefan}) that we may now assume $\si(M/f_3)$ is not $3$-connected.} We next \blue{complete the proof of} the lemma for when $n=4$. The remaining part of the lemma for when $n=5$ is proved similarly \blue{and is omitted}.

\blue{As $\si(M/f_3)$ is not $3$-connected, it follows} by Lemma~\ref{vertical1} \blue{that}
$$(A\cup \{f_1, f_2\}, \{f_3\}, B\cup \{f_4\})$$
is a vertical $3$-separation of $M$, where $|A|\ge 1$ and $|B|\ge 2$. Say $|A|=1$, where $A=\{f_0\}$. Then $A\cup \{f_1, f_2\}$ is a triad, and so $(f_0, f_1, f_2, f_3, f_4)$ is a $5$-element fan, contradicting the maximality of $F$. Thus $|A|\ge 2$. Since $A\cup B$ and $B\cup \{f_4\}$ are $3$-separating in $M$, it follows by uncrossing that $B$ is $3$-separating in $M$. Similarly, $A$ is $3$-separating in $M$. Hence
$$(A, \{f_1, f_2\}, \{f_3, f_4\}, B)$$ is a flower $\Phi$. Since $\sqcap(\{f_1, f_2\}, \{f_3, f_4\})=1$, it follows by \cite[Theorem~4.1]{oxl04} that
$$\sqcap(A, \{f_1, f_2\})=\sqcap(\{f_3, f_4\}, B)=\sqcap(A, B)=1.$$
To show that $\Phi$ is a swirl-like flower, it remains to show that
$$\sqcap(\{A, \{f_3, f_4\})=\sqcap(B, \{f_1, f_2\})=0.$$

If $f_1\not\in {\rm cl}(A)$, then, as $f_2\not\in {\rm cl}(A\cup \{f_1\})$, it follows that $r(A\cup \{f_1, f_2\})=r(A)+2$. But then $\sqcap(A, \{f_1, f_2\})=0$, a contradiction. Thus $f_1\in {\rm cl}(A)$. Furthermore, $f_3\not\in {\rm cl}(A)$. Assume that $f_4\in {\rm cl}(A\cup \{f_3\})$. Then, as $\sqcap(\{f_3, f_4\}, B)=1$,
\begin{align*}
1 & = r_{M/f_3}(A\cup \{f_1, f_2\})+r_{M/f_3}(B\cup \{f_4\})-r(M/f_3) \\
& = r_{M/f_3}(A\cup \{f_1, f_2, f_4\})+r_{M/f_3}(B)-r(M/f_3) \\
& = r(A\cup F)-1+r(B)-(r(M)-1) \\
& = r(A\cup F)+r(B)-r(M),
\end{align*}
and so $B$ is $2$-separating in $M$, a contradiction. Thus $f_4\not\in {\rm cl}(A\cup \{f_3\})$, and so $\sqcap(A, \{f_3, f_4\})=0$. To see that $\sqcap(B, \{f_1, f_2\})=0$, first assume that $f_1\in {\rm cl}(B)$. Then, as $f_1\in {\rm cl}(A)$,
\begin{align*}
1 & = r_{M/f_3}(A\cup \{f_1, f_2\})+r_{M/f_3}(B\cup \{f_4\})-r(M/f_3) \\
& = r_{M/f_3}(A)+r_{M/f_3}(B\cup \{f_1, f_2, f_4\})-r(M/f_3) \\
& = r(A)+r(B\cup F)-1-(r(M)-1) \\
& = r(A)+r(B\cup F)-r(M),
\end{align*}
and so $A$ is $2$-separating in $M$. This contradiction implies that $f_1\not\in {\rm cl}(B)$. It follows that $r(B\cup \{f_1, f_2\})=r(B)+2$, that is $\sqcap(B, \{f_1, f_2\})=0$. We deduce that $(A, \{f_1, f_2\}, \{f_3, f_4\}, B)$ is a swirl-like flower. \blue{Lastly, as $f_1\in \cl(A)$ and $\sqcap(B, \{f_3, f_4\})=1$, it follows that $(A\cup\{f_1\}  ,\{f_2\},B\cup\{f_3,f_4\})$ is a cyclic $3$-separation of $M$, and so $\co(M\delete f_2)$ is not $3$-connected, that is, $f_2$ is not elastic. Hence (iii) holds.}
\end{proof}

\begin{figure}[ht]
	\centering
	\begin{tikzpicture}[scale=1]
	\coordinate (cntrl1) at (-50:2.5) {};
	\coordinate (cntrl2) at (-20:2.5) {};
	%\node[draw,fill=white] at (cntrl1) {};
	%\node[draw,fill=white] at (cntrl2) {};
	\begin{scope}[every node/.style=element]
	\node (e1) at (-210:2) {};
	\coordinate (c2) at (90:1) {};
	\coordinate (c1) at (30:2);
	\node (e3) at (0,4) {};
	\coordinate (b) at (-120:2) {};
	
	\coordinate (cntr1) at ($(e1) + (230:1.5)$);
	\coordinate (cntr2) at ($(c1) + (-60:1.5)$);
	
	%\node at (cntr1) {};
	%\node at (cntr2) {};

	\draw (c1) to (e1);
	\node (e2) at ($(e1)!0.5!(e3)$) {};

	\draw (c1) to  (b);
	\draw (e1) to (b);
	
	\draw (e1) .. controls (cntr1) .. (b);
	\draw (c1) .. controls (cntr2) .. (b);
	\draw (e1) to (e2) to (e3);
	\draw (e2) to (c1);
	\draw (e3) to (c2);
	
	\node (e4) at (intersection of e3--c2 and e2--c1) {};
	
	\end{scope}
	\node[scale=1.0] at ($(e1)+(150:0.5)$) {$f_1$};
	\node[scale=1.0] at ($(e2)+(150:0.5)$) {$f_3$};
	\node[scale=1.0] at ($(e3)+(150:0.5)$) {$f_2$};
	\node[scale=1.0] at ($(e4)+(30:0.5)$) {$f_4$};
	
	\node[scale=1.5] at ($(e1)+(-100:1)$) {$A$};
	\node[scale=1.5] at ($(c1)+(-110:1)$) {$B$};
	
	%\node[scale=1.0] at (-90:2.5) {The swirl-like flower described in (iii) of Lemma \ref{elastic fans}.};
	\end{tikzpicture}
	\caption{The swirl-like flower $(A, \{f_1, f_2\}, F-\{f_1, f_2\}, B)$ of Lemma \ref{elastic fans} where, if $|F|=5$, then $f_5$ is an element in $B$.}
	\label{fig: flower}
\end{figure}

\section{Elastic Elements in Segments}
\label{sec: segments}

Let $M$ be a matroid. A subset $L$ of $E(M)$ of size at least two is a \emph{segment} if $M|L$ is isomorphic to a rank-$2$ uniform matroid. In this section we consider when an element in a segment is deletable or contractible. We begin with the following elementary lemma.

\begin{lemma}
\label{delete1}
Let $L$ be a segment of a $3$-connected matroid $M$. If $L$ has at least four elements, then $M\delete \ell$ is $3$-connected for all $\ell\in L$.
\end{lemma}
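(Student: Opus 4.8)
The plan is to argue by contradiction: assuming $M\delete \ell$ is not $3$-connected, I would produce a $2$-separation of $M$ itself, contradicting $3$-connectivity. Write $N=M\delete\ell$ and $L'=L-\ell$, so that $M|L'\cong U_{2,|L'|}$ with $|L'|\ge 3$. The first observation is that $\ell$ lies in the span of $L'$, so $\ell$ is not a coloop of $M$ and hence $r(N)=r(M)$; moreover $r_M(S)=r_N(S)$ for every $S\subseteq E(M)-\ell$. These two facts let me freely translate rank computations between $M$ and $N$, which is the engine of the whole argument.

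Now suppose $N$ has a $2$-separation $(X,Y)$, so $\lambda_N(X)\le 1$ and $|X|,|Y|\ge 2$. Since $|L'|\ge 3$, by pigeonhole one of the two sides --- say $X$ --- contains at least two elements of $L'$. As any two distinct points of the rank-$2$ set $L$ span $L$, we get $\ell\in\cl_M(L'\cap X)\subseteq\cl_M(X)$, and therefore $r_M(X\cup\{\ell\})=r_M(X)$. Using the rank identities from the previous paragraph,
\begin{align*}
\lambda_M(X\cup\{\ell\}) &= r_M(X\cup\{\ell\})+r_M(Y)-r(M)\\
&= r_N(X)+r_N(Y)-r(N)=\lambda_N(X)\le 1.
\end{align*}
Since $|X\cup\{\ell\}|\ge 3$ and $|Y|\ge 2$, the partition $(X\cup\{\ell\},Y)$ is a $2$-separation of $M$, the desired contradiction. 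The point is that I never need to move any element of $L'$ across the separation; putting back the single element $\ell$ on the side already carrying the bulk of the long line suffices.

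The main thing to check carefully is that ``not $3$-connected'' really delivers a usable low-order separation. A genuine $2$-separation is handled exactly as above. The only degenerate possibilities are a $1$-separation or a separation with a singleton side $\{y\}$; here I would note that $y\notin L'$ (every element of $L'$ lies on a line with at least two others, so is neither a loop nor a coloop of $N$), whence $L'\subseteq X$ for the large side $X$ and the same computation gives $\lambda_M(X\cup\{\ell\})=\lambda_N(X)\le 0$, again contradicting the $2$-connectivity of $M$. I expect this bookkeeping around the small cases, together with confirming that the untouched side $Y$ still has at least two elements, to be the only real friction; the core estimate is immediate once $\ell$ is seen to lie in $\cl_M(X)$. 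Orthogonality (Lemma~\ref{orthogonality}) could alternatively be invoked to dispatch the singleton case, but the closure argument already makes it transparent.
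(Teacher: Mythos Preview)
Your argument is correct and is the standard one. Note, however, that the paper does not actually supply a proof of Lemma~\ref{delete1}: it is stated as an ``elementary lemma'' and left to the reader. So there is no paper proof to compare against; your write-up simply fills in what the authors deemed routine. The core step---that at least two points of $L'$ land on one side of any putative low-order separation of $M\delete\ell$, whence $\ell$ lies in the closure of that side and the separation lifts to $M$---is exactly the expected mechanism. Your handling of the degenerate cases (a $1$-separation, or a singleton side) is adequate, though you could streamline it by observing that any $k$-separation of $N$ with $k\le 2$ and both sides of size at least two is dispatched uniformly by the main computation, and the only remaining case is a loop or coloop of $N$, which is ruled out since every element of $E(N)$ either lies in $L'$ or is already neither a loop nor a coloop in $M$ (and deleting $\ell\in\cl(L')$ cannot create one).
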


\noindent In particular, Lemma~\ref{delete1} implies that, in a $3$-connected matroid, every element of a segment with at least four elements is deletable. \blue{We next determine the structure which arises when elements of a segment in a $3$-connected matroid are not contractible.}

\begin{lemma}
\label{contract+}
Let $M$ be a $3$-connected matroid, and suppose that $L\cup \{w\}$ is a rank-$3$ cocircuit of $M$, where $L$ is a segment. \blue{If two distinct elements $y_1$ and $y_2$ of $L$ are not contractible, then there are distinct elements $w_1$ and $w_2$ of $E(M)-(L\cup\{w\})$ such that $(\cl(L)-\{y_i\})\cup\{w_i\}$ is a cocircuit for each $i\in \{1,2\}$.}
\end{lemma}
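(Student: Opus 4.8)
The plan is to produce the required cocircuit separately for each non-contractible element and then to show that the two external elements obtained are distinct by a short cocircuit-exchange argument. Fix $i\in\{1,2\}$ and write $y=y_i$. Since $y$ is not contractible, $\si(M/y)$ is not $3$-connected, so by Lemma~\ref{vertical1} the matroid $M$ has a vertical $3$-separation $(X,\{y\},Y)$, and by Lemma~\ref{vertical2} we may take it so that $Y\cup\{y\}$ is closed. As $y\in\cl(X)\cap\cl(Y)$ and $L$ is a segment through $y$, the first task is to locate the rest of the line $\cl(L)$: I would show that $\cl(L)-\{y\}\subseteq X$. Indeed, if some element of $\cl(L)-\{y\}$ lay in $Y$, then together with $y\in\cl(Y)$ the whole line $\cl(L)$ would lie in the flat $Y\cup\{y\}$, and a short argument using orthogonality (Lemma~\ref{orthogonality}) against the cocircuit $L\cup\{w\}$ together with Lemma~\ref{notmany} rules this out (it would force $X$ to be too small or to be $2$-separating).

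The aim is then to arrange that $Y\cup\{y\}$ is in fact a hyperplane, so that $X=E(M)-(Y\cup\{y\})$ is a cocircuit containing $\cl(L)-\{y\}$, and to show that $X$ contains exactly one element off the line $\cl(L)$; calling that element $w_i$ gives the cocircuit $(\cl(L)-\{y\})\cup\{w_i\}$. Here the rank-$3$ cocircuit hypothesis is used heavily. By the contrapositive of Lemma~\ref{triangle}, the set $\cl(L\cup\{w\})-\{y\}$ contains no triangle of $M/y$; since $\cl(L)-\{y\}$ becomes a single parallel class in $M/y$, this forces the rank-$2$ set $\cl(L\cup\{w\})-\{y\}$ of $M/y$ to have at most two parallel classes, and hence forces the flat $\cl(L\cup\{w\})$ to be the union of the two lines $\cl(L)$ and $\cl(\{y,w\})$ meeting at $y$. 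Combining this local structure with the $3$-separation $(X\cup\{y\},Y)$, and uncrossing (Lemma~\ref{uncrossing}) the complementary hyperplane $E(M)-(L\cup\{w\})$ against $Y\cup\{y\}$, I would show that $r(X)=3$ and that $X-\cl(L)$ is a single element $w_i$. This matches $\Theta_n$: taking $L=W-\{w_k\}$ and $w=z_k$, the cocircuit produced for the non-contractible element $w_j\in L$ is exactly $(\cl(W)-\{w_j\})\cup\{z_j\}$, and the external element $z_j$ lies off the relevant flat, confirming that $w_i$ genuinely comes from the separation rather than from $\cl(L\cup\{w\})$.

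For distinctness, suppose to the contrary that $w_1=w_2=:w'$. Then
$$C_1^*=(\cl(L)-\{y_1\})\cup\{w'\}\quad\text{and}\quad C_2^*=(\cl(L)-\{y_2\})\cup\{w'\}$$
are cocircuits of $M$, and they are distinct since $y_2\in C_1^*$ but $y_2\notin C_2^*$. A direct computation gives $C_1^*\,\triangle\,C_2^*=\{y_1,y_2\}$. Since the symmetric difference of two distinct cocircuits of a matroid contains a cocircuit, $\{y_1,y_2\}$ contains a cocircuit of $M$, so $M$ has a cocircuit of size at most two. As $M$ is $3$-connected with $|E(M)|\ge 4$, its cogirth is at least three, a contradiction. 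Hence $w_1\ne w_2$, completing the proof.

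The main obstacle is the step in the second paragraph establishing that $r(X)=3$, equivalently that $Y\cup\{y\}$ is a hyperplane meeting $\cl(L)$ only in $y$ and containing every off-line element except one. Everything else is bookkeeping with the connectivity lemmas, but pinning down that precisely one external element survives outside the hyperplane is where the rank-$3$ cocircuit hypothesis, the two-line structure coming from Lemma~\ref{triangle}, and the maximality afforded by Lemma~\ref{vertical2} must all be combined carefully; I expect this to require the most delicate uncrossing and local-connectivity ($\sqcap$) estimates.
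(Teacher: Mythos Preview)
Your opening move is wrong. With $Y\cup\{y\}$ closed, the segment $\cl(L)$ lands on the $Y$-side, not the $X$-side: since $y\in\cl(Y)$, as soon as any element of $\cl(L)-\{y\}$ lies in $Y$ the whole line is in the flat $Y\cup\{y\}$, and this is exactly what happens. Your ``short argument using orthogonality \ldots\ together with Lemma~\ref{notmany}'' does not produce a contradiction. Concretely, take $M=\Theta_n$ with $n\ge 4$, $L=W-\{w_i\}$, $w=z_i$, and $y=w_j$ for some $j\ne i$: then $(X,\{y\},Y)=(Z-\{z_j\},\{w_j\},(W-\{w_j\})\cup\{z_j\})$ is a vertical $3$-separation with $Y\cup\{y\}=W\cup\{z_j\}$ closed and $\cl(L)-\{y\}=W-\{w_j\}\subseteq Y$. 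Here $\cl(Y)\cap X=\emptyset$, so Lemma~\ref{notmany} has no purchase. The paper in fact \emph{starts} from $L-\{y_i\}\subseteq Y_i$ and places $w\in X_i$, the opposite of your setup.

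More fundamentally, your plan to treat each non-contractible $y_i$ on its own and only combine at the end for distinctness cannot be completed as written. The paper obtains the cocircuit $(\cl(L)-\{y_i\})\cup\{w_i\}$ by crossing the \emph{two} vertical $3$-separations (one for $y_1$, one for $y_2$) inside $M\backslash w$: it shows each $(U_i,V_i\cup L)$ is a $2$-separation of $M\backslash w$, uncrosses to get $|U_i\cap V_j|=1$, and then uses the extra hypothesis that $|Y_i|$ was chosen \emph{minimal} to force $r((V_1\cap V_2)\cup(L-\{y_1\}))=2$, whence $(V_1\cap V_2)\cup L=\cl(L)$ and $Y_i=(\cl(L)-\{y_i\})\cup\{w_i\}$ is the desired cocircuit. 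The existence of a second separation to uncross against, and the minimality of $|Y_i|$, are precisely what pins $|X-\cl(L)|$ down to one; neither your two-line consequence of Lemma~\ref{triangle} nor the bare closure of $Y\cup\{y\}$ supplies a substitute. Your symmetric-difference argument for $w_1\ne w_2$ is correct and tidy, but it is the easy part; the hard part is producing the cocircuits, and that requires the interaction you have omitted.
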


\begin{proof}
Let $y_1$ and $y_2$ be distinct elements of $L$ that are not contractible. For each $i\in \{1, 2\}$, it follows by Lemma~\ref{vertical1} that there exists a vertical $3$-separation $(X_i, \{y_i\}, Y_i)$ of $M$ such that $y_j\in Y_i$, where $\{i, j\}=\{1, 2\}$. By Lemma~\ref{vertical2}, we may assume $Y_i\cup \{y_i\}$ is closed, in which case, $L-\blue{\{}y_i\blue{\}}\subseteq Y_i$. \blue{Furthermore, for each $i\in \{1, 2\}$, we may also assume, amongst all such vertical $3$-separations of $M$, that $|Y_i|$ is minimised.} If $w\in Y_i$, then, as $L\cup \{w\}$ is a cocircuit, $X_i$ is contained in the hyperplane $E(M)-(L\cup \{w\})$, and so $y_i\not\in \cl(X_i)$. This contradiction implies that $w\in X_i$. Thus, for each $i\in \{1, 2\}$, we deduce that $M$ has a vertical $3$-separation
$$(U_i\cup \{w\}, \{y_i\}, V_i\cup (L-\blue{\{}y_i\blue{\}})),$$
where $U_i\cup \{w\}=X_i$ and $V_i\cup (L-\{y_i\})=Y_i$. Next we show the following.

\begin{sublemma}
\label{contract1.1}
For each $i\in \{1, 2\}$, we have $w\in \cl_M(U_i\cup \{y_i\})-\cl_M(U_i)$.
\end{sublemma}
Since $L\cup \{w\}$ is a cocircuit, the elements $y_i, w\not\in \cl_M(U_i)$. But $y_i\in \cl_M(U_i\cup \{w\})$, and so $y_i\in \cl_M(U_i\cup \{w\})-\cl_M(U_i)$. Thus, by the MacLane-Steinitz exchange property, $w\in \cl_M(U_i\cup \{y_i\})-\cl_M(U_i)$.

\begin{sublemma}
\label{contract1.2}
For each $i\in \{1, 2\}$, we have $y_i\not\in \cl_M(U_j\cup \{w\})$, where $\{i, j\}=\{1, 2\}$.
\end{sublemma}
By Lemma~\ref{vertical2},
$$(\cl(U_j\cup \{w\})-\{y_j\}, \{y_j\}, (V_j\cup (L-\blue{\{}y_j\blue{\}}))-\cl(U_j\cup \{w\}))$$
is a vertical $3$-separation of $M$. If $y_i \in \cl(U_j\cup \{w\})$, then, as $y_j\in \cl(U_j\cup \{w\})$, the segment $L$ is contained in $\cl(U_j\cup \{w\})$. Therefore $L\cup \{w\}\subseteq \cl(U_j\cup \{w\})$, and so $(V_j\cup (L-\blue{\{}y_{\blue{j}}\blue{\}}))-\cl(U_j\cup \{w\})=V_j-\cl(U_j\cup \{w\})$. Since $V_j-\cl(U_j\cup \{w\})$ is contained in the hyperplane $E(M)-(L\cup \{w\})$, it follows that $y_j\not\in V_j-\cl(U_j\cup \{w\})$, a contradiction. Thus~(\ref{contract1.2}) holds.

Since $M$ is $3$-connected and $(U_i\cup \{w\}, \{y_i\}, V_i\cup (L-\blue{\{}y_i\blue{\}}))$ is a vertical $3$-separation, it follows by~(\ref{contract1.1}) that
$$r(U_i)+r(V_i\cup L)-r(M\delete w)=r(U_i\cup \{w\})-1+r(V_i\cup L)-r(M)=1.$$
Thus $(U_i, V_i\cup L)$ is a $2$-separation of $M\delete w$ for each $i\in \{1, 2\}$. \blue{We next show that}

\begin{sublemma}
\label{1elt}
\blue{$|U_1\cap V_2|=|U_2\cap V_1|=1$.}
\end{sublemma}

\blue{Let $\{i, j\}=\{1, 2\}$. If $U_i\subseteq U_j$, then
$$y_i\in \cl(U_i\cup \{w\})\subseteq \cl(U_j\cup \{w\}),$$
contradicting~(\ref{contract1.2}). \blue{Therefore, for $\{i, j\}=\{1, 2\}$, we have $|U_i\cap V_j|\ge 1$.} Consider the $2$-connected matroid $M\delete w$. Since $|U_j\cap V_i|\ge 1$, it follows by uncrossing that $U_i\cup (V_j\cup L)$ is $2$-separating in $M\delete w$. But, by~(\ref{contract1.1}), $w\in \cl_M(U_i\cup L)$ and so $U_i\cup V_j\cup (L\cup \{w\})$ is $2$-separating in $M$. Since $M$ is $3$-connected, it follows that $|U_j\cap V_i|\leq 1$. Thus (\ref{1elt}) holds.
}

\blue{Let $w_1$ and $w_2$ be the unique elements of $U_2\cap V_1$ and $U_1\cap V_2$, respectively. Now $|(U_1\cup \{w\})\cap (U_2\cup \{w\})|\ge 2$ and so, by uncrossing, $V_1\cup L$ and $V_2\cup L$, as well as $V_1\cup L$ and $V_2\cup (L-\{\blue{y_1}\})$, we see that $(V_1\cap V_2)\cup L$ and $(V_1\cap V_2)\cup (L-\{y_1\})$ are $3$-separating in $M$. So
$$(\blue{U_1\cup U_2\cup \{w\}}, \{y_1\}, (V_1\cap V_2)\cup (L-\{y_1\}))$$
is a vertical $3$-separation of $M$ unless $r((V_1\cap V_2)\cup (L-\{y_1\})=2$. Since $V_1\cup L$ and $V_2\cup L$ are closed, $(V_1\cap V_2)\cup L$ is closed. Furthermore,
$$|(V_1\cap V_2)\cup (L-\{y_1\})| < |V_1\cup (L-\{y_1\})|,$$
and so, by the minimality of $|Y_1|$, we have $r((V_1\cap V_2)\cup (L-\{y_1\})=2$. Therefore, as $(U_1\cup \{w\}, \{y_1\}, V_1\cup (L-\{y_1\}))$ and $(U_2\cup \{w\}, \{y_2\}, V_2\cup (L-\{y_2\}))$ are both vertical $3$-separations, \blue{and}
$$(V_1\cap V_2)\cup (L-\{y_i\})\cup \{w_i\} \blue{= V_i\cup (L-\{y_i\})},$$
\blue{it follows that $(V_1\cap V_2)\cup (L-\{y_i\})\cup \{w_i\}$} is a cocircuit for each $i\in\{1, 2\}$. Since $y_1\in \cl((V_1\cap V_2)\cup (L-\{y_1\}))$, we have $(V_1\cap V_2)\cup L=\cl(L)$, thereby completing the proof of the lemma.}
\end{proof}

%\begin{corollary}%unused but perhaps interesting
%	Let $T^*$ be a triad in a $3$-connected matroid $M$. If no triangle meets $T^*$, then it has at least two contractible elements.
%\end{corollary}

\section{\blue{Theta Separators}}
\label{thetasep}
\blue{
We begin this section by formally defining, for all $n\ge 2$, the matroid $\Theta_n$. Let $n\ge 2$, and let $M$ be the matroid whose ground set is the disjoint union of $W=\{w_1, w_2, \ldots, w_n\}$ and $Z=\{z_1, z_2, \ldots, z_n\}$, and whose circuits are as follows:
\begin{enumerate}[{\rm (i)}]
\item all $3$-element subsets of $W$;
\item all sets of the form $(Z-\{z_i\})\cup \{w_i\}$, where $i\in \{1, 2, \ldots, n\}$; and
\item all sets of the form $(Z-\{z_i\})\cup\{w_j, w_k\}$, where $i$, $j$, and $k$ are distinct elements of $\{1, 2, \ldots, n\}$.
\end{enumerate}
It is shown in \cite[Lemma~2.2]{oxl00} that $M$ is indeed a matroid, and we denote this matroid by $\Theta_n$. If $n=2$, then $\Theta_2$ is isomorphic to the direct sum of $U_{1, 2}$ and $U_{1, 2}$, while if $n=3$, then $\Theta_3$ is isomorphic to $M(K_4)$. Also, for all $n$, the matroid $\Theta_n$ is self-dual under the map that interchanges $w_i$ and $z_i$ for all $i$~\cite[Lemma~2.1]{oxl00}, and the rank of $\Theta_n$ is $n$. For all $i$, we say $w_i$ and $z_i$ are {\em partners}. Furthermore, it is easily checked that, for all $i, j\in \{1, 2, \ldots, n\}$, we have $\Theta_n\delete w_i\cong \Theta_n\delete w_j$. Up to isomorphism, we denote the matroid $\Theta_n\delete w_i$ by $\Theta^-_n$. Observe that if $n=3$, then $\Theta^-_3$ is a $5$-element fan. We refer to the elements in $W$ and $Z$ as the {\em segment elements} and {\em cosegment elements}, respectively, of $\Theta_n$ and $\Theta^-_n$.}

\blue{Recalling the definition of a $\Theta$-separator, the next lemma considers the elasticity of elements in a $\Theta$-separator \blue{when $n\geq 4$}. The analogous lemma for when $n=3$ is covered by Lemma~\ref{elastic fans}. Observe that, if $M$ is $3$-connected and $S$ is a $\Theta$-separator of $M$ such that $M|S\cong \Theta_n$ for some $n\ge 3$, then
$$r(M)=r(M\delete S)+n-2.$$}

%\textcolor{green}{
%\begin{lemma}
%	\label{theta}
%	Let $M$ be a $3$-connected matroid of rank, corank at least four and let $S$ be a $\Theta$-separator of $M$. If $M|S\cong \Theta_n$, where $n\geq 4$, then $S$ contains no elastic elements of $M$. Furthermore, if $M|S\cong \Theta^-_n$, where $n\geq 4$, then $S$ contains exactly one elastic element, namely the unique cosegment element of $M|S$ with no partner, unless there is an element $w$ of $\cl(S)-S$ such that $M|(S\cup \{w\})\cong \Theta_n$.
%\end{lemma}\marginpar{G: We could extend Lemma \ref{theta} to incorperate $M|S\cong\Theta_3$, at the cost of only a little additional argument. Dealer's choice.}}
\blue{
\begin{lemma}
\label{theta}
Let $M$ be a $3$-connected matroid, and let \blue{$n\ge 4$. Suppose that} $S$ is a $\Theta$-separator of $M$. If $M|S\cong \Theta_n$, then $S$ contains no elastic elements of $M$. Furthermore, if $M|S\cong \Theta^-_n$, then $S$ contains exactly one elastic element, namely the unique cosegment element of $M|S$ with no partner, unless there is an element $w$ of $\cl(S)-S$ such that $M|(S\cup \{w\})\cong \Theta_n$.
\end{lemma}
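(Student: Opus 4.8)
The plan is to first pin down exactly how $E(M)-S$ attaches to $S$, and then, for all but one element of $S$, to write down an explicit vertical or cyclic $3$-separation witnessing non-contractibility or non-deletability. Throughout I write $S=W\cup Z$ with $W$ the segment elements and $Z$ the cosegment elements, and I use that (by the remark preceding the lemma) $\lambda_M(S)=2$ and $r_M(S)=n$. The crucial preliminary step combines $\lambda_M(S)=2$ with the hypothesis that $Z$ is corank-$2$: since $r^*(Z)=2$ gives $r_M(E(M)-Z)=r(M)-n+2$, while $\lambda_M(S)=2$ gives $r(M)=r_M(E(M)-S)+n-2$, together these force $r_M(W\cup(E(M)-S))=r_M(E(M)-S)$, that is, $W\subseteq\cl(E(M)-S)$. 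Hence the guts of $(S,E(M)-S)$ is exactly $\cl(W)$, so $E(M)-S$ meets $S$ only along the segment $W$; dualising (equivalently, contracting $E(M)-S$, under which $W$ becomes a set of loops) then yields $M^*|Z\cong U_{2,n}$, so $Z$ is a cosegment of $M$. Everything downstream rests on this confinement of the attachment to $W$, together with the resulting identities $\lambda_M(Z-z_i)=r^*_M(Z-z_i)=2$ and $r^*_M((Z-z_i)\cup\{w_i\})=3$.

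Assume first that $M|S\cong\Theta_n$. For a segment element $w_i$ I would exhibit the vertical $3$-separation $(Z-z_i,\{w_i\},(W-w_i)\cup\{z_i\}\cup(E(M)-S))$: here $w_i\in\cl(Z-z_i)$ by the partner circuit $(Z-z_i)\cup\{w_i\}$ and $w_i\in\cl(W-w_i)$ since $W$ is a segment, while $W\subseteq\cl(E(M)-S)$ makes $\lambda_M(Z-z_i)=2$ with both sides of rank at least $3$; by Lemma~\ref{vertical1}, $w_i$ is not contractible. For a cosegment element $z_i$ I would dualise this to the cyclic $3$-separation $((Z-z_i)\cup\{w_i\},\{z_i\},(W-w_i)\cup(E(M)-S))$: the partner point makes $(Z-z_i)\cup\{w_i\}$ have rank $n-1$ and $M^*$-rank $3$, Lemma~\ref{orthogonality} gives $z_i\in\cl^*$ of both sides (using $z_i\notin\cl(E(M)-S)=\cl((W-w_i)\cup(E(M)-S))$ and $z_i\notin\cl((Z-z_i)\cup\{w_i\})$), and the same rank identity gives $\lambda=2$; by Lemma~\ref{vertical1}, $z_i$ is not deletable. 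Since each case already supplies one failing condition, no element of $S$ is elastic.

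Now suppose $M|S\cong\Theta^-_n$, so that $S=(W-w_n)\cup Z$ and $z_n$ is the partner-less cosegment element. For every $i<n$ the partner $w_i$ is present, so verbatim copies of the two separations above (with $W$ replaced by $W-w_n$) show $w_i$ is not contractible and $z_i$ is not deletable; hence none of these is elastic. The element $z_n$ is contractible because $Z$ is a cosegment with $n\ge4$ elements (the dual of Lemma~\ref{delete1}), so the whole question reduces to deciding when $z_n$ is deletable. Here I would apply Lemma~\ref{contract+} in $M^*$ (equivalently, its dual in $M$) to the cosegment $Z$: if $z_n$ were not deletable then, together with one of the already-non-deletable $z_i$ $(i<n)$, two elements of $Z$ would fail, and the conclusion of that lemma produces a circuit of the form $(\cl^*(Z)-z_n)\cup\{w\}$ whose new element $w$ serves as a partner for $z_n$, forcing $M|(S\cup\{w\})\cong\Theta_n$ with $w\in\cl(S)-S$. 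Contrapositively, when no such $w$ exists, $z_n$ is deletable, and being also contractible it is elastic; as all other elements of $S$ are non-elastic, $z_n$ is then the unique elastic element of $S$.

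I expect the main obstacle to be exactly this last step: matching the precise hypotheses of Lemma~\ref{contract+} (a segment sitting inside a rank-$3$ cocircuit) to the cosegment $Z$ viewed inside $M^*$, and verifying that the element it manufactures really lies in $\cl(S)$ and completes the full partner structure of $\Theta_n$ rather than producing only a near-miss. A second, more routine but indispensable point is the structural identity $W\subseteq\cl(E(M)-S)$ of the first paragraph, without which neither $\lambda_M(Z-z_i)=2$ nor $r^*_M((Z-z_i)\cup\{w_i\})=3$ holds and the two separations cease to be exactly $3$-separating; this must therefore be established before any separation is checked. Finally, I would dispose separately of the degenerate possibility $S=E(M)$ and of the small case $n=4$ of $\Theta^-_n$, where $W-w_n$ has only three elements and so the segment-based deletability/contractibility inputs must be replaced by a direct argument.
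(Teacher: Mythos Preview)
Your approach is essentially the paper's: for each segment element $w_i$ exhibit the vertical $3$-separation $(Z-\{z_i\},\{w_i\},\ldots)$, for each partnered cosegment element $z_i$ exhibit the cyclic $3$-separation $((Z-\{z_i\})\cup\{w_i\},\{z_i\},\ldots)$, argue that the partner-less $z_j$ is contractible by the dual of Lemma~\ref{delete1}, and finish with the dual of Lemma~\ref{contract+}. Your preliminary identity $W\subseteq\cl(E(M)-S)$ is a tidy way to see the rank/corank facts, though the paper just computes $\lambda(C_i)$ and $r^*(C_i)$ directly from $r^*(Z)=2$ without it.

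One point to tighten: you say you would ``apply Lemma~\ref{contract+} in $M^*$ \ldots\ to the cosegment $Z$'', but the hypotheses of that lemma require $L\cup\{w\}$ to be a corank-$3$ circuit, and there is no circuit of $M$ of the form $Z\cup\{w\}$. The paper instead takes $L=Z-\{z_i\}$ for some fixed $i\neq j$ and uses the partner circuit $C_i=(Z-\{z_i\})\cup\{w_i\}$, which is corank~$3$; the two non-deletable elements are then $z_j$ and any $z_k$ with $k\neq i,j$. With this choice $\cl^*(L)=\cl^*(Z)$, so the conclusion matches what you wrote, but the lemma cannot be invoked on $L=Z$ itself. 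Your instinct that the $n=4$ case of $\Theta^-_n$ needs separate attention is right and matches what the paper does: with only two segment elements left in the complement of $(Z-\{z_i\})\cup\{w_i,z_i\}$, the existence of a circuit there is obtained via Lemma~\ref{3sep2} rather than from a triangle in $W-\{w_j\}$.
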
}

\begin{proof}
\blue{Suppose that $M|S\cong \Theta_n$, where $n\ge 4$. Without loss of generality, we may assume that $S$ is the disjoint union of $W=\{w_1, w_2, \ldots, w_n\}$ and $Z=\{z_1, z_2, \ldots, z_n\}$, where $W$ and $Z$ are as defined in the definition of $\Theta_n$. Let $i\in \{1, 2, \ldots, n\}$. As $M|S\cong \Theta_n$, the set $C_i=(Z-\{z_i\})\cup \{w_i\}$ is a circuit of $M$. Now, \blue{as $Z$ has corank~$2$, the circuit $C_i$ has corank~$3$, and so}
$$\blue{\lambda(C_i)=r(C_i)+r^*(C_i)-|C_i|=(|C_i|-1)+3-|C_i|=2.}$$
So $C_i$ is $3$-separating. \blue{Furthermore, $z_i\in \cl^*(C_i)$ and, by Lemma~\ref{orthogonality}, $z_i\not\in \cl(E(M)-(C_i\cup \{z_i\})$. Thus, by Lemma~\ref{3sep2}, $z_i\in \cl^*(E(M)-(C_i\cup \{z_i\})$ and so, as} $E(M)-(C_i\cup \{z_i\})$ contains a triangle in $W-\{w_i\}$,
$$(C_i, \{z_i\}, E(M)-(C_i\cup \{z_i\}))$$
is a cyclic $3$-separation of $M$. \blue{Therefore}, by Lemma~\ref{vertical1}, $z_i$ is not deletable. Moreover, as
$$(Z-\{z_i\}, \{w_i\}, E(M)-((Z-\{z_i\})\cup\{w_i\}))$$
is a vertical $3$-separation of $M$, it follows by Lemma~\ref{vertical1} that $w_i$ is not contractible. Thus $S$ contains no elastic elements of $M$.}

\blue{Now suppose that $M|S\cong \Theta^-_n$, where $n\ge 4$. Without loss of generality, let $S$ be the disjoint union of $W-\{w_j\}$ and $Z$, where $W=\{w_1, w_2, \ldots, w_n\}$ and $Z=\{z_1, z_2, \ldots, z_n\}$ are as defined in the definition of $\Theta_n$. Let $z_i\in Z-\{z_j\}$. Then the argument in the last paragraph shows that
$$((Z-\{z_i\})\cup \{w_i\}, \{z_i\}, E(M)-(Z\cup \{w_i\})$$
is a cyclic $3$-separation of $M$ provided $E(M)-(Z\cup \{w_i\})$ contains a circuit. If $n\ge 5$, then $|W|\ge 4$, and so $E(M)-(Z\cup \{w_i\})$ contains a circuit. Assume that $n=4$. \blue{Then, as} $r^*(M)\ge 4$, we have $|E(M)-(Z\cup \{w_i\})|\ge 3$. Therefore, as $w_k\in \cl(Z\cup \{w_i\})$, where $w_k\in W-\{w_i, w_j\}$, and $Z\cup \{w_i\}$ is exactly $3$-separating, it follows by Lemma~\ref{3sep2} that $w_k\in \cl(E(M)-(Z\cup \{w_i, w_k\})$. In particular, $E(M)-(Z\cup \{w_i\})$ contains a circuit. Hence $z_i$ is not deletable. Furthermore, the argument in the previous paragraph shows that if $w_i\in W-\{w_j\}$, then $w_i$ is not contractible.}

\blue{We complete the proof of the lemma by considering the elasticity of $z_j$. Since $|Z|\geq 4$, it follows by Lemma~\ref{delete1} that $z_j$ is contractible. Assume that $z_j$ is not deletable. Let $i\in \{1, 2, \ldots, n\}$ such that $i\neq j$. Then $C_i=(Z-\{z_i\})\cup \{w_i\}$ is a circuit of $M$. Furthermore,
\begin{align*}
r^*((Z-\{z_i\})\cup \{w_i\}) & = (r(M)-(|C_i|-3))+|C_i|-r(M) \\
& = 3.
\end{align*}
Therefore, as $z_j\in Z-\{z_i\}$ and all elements of $Z-\{z_i\}$ are not deletable, the dual of Lemma~\ref{contract+} implies that there is an element $w$ such that $(Z-\{z_j\})\cup \{w\}$ is a circuit. But then, as $w\in \cl(Z)-Z$, it follows that $w\in \cl(W-\{w_j\})$, and it is easily checked that $M|(S\cup \{w\})\cong \Theta_n$, thereby completing the proof of the lemma.}
\end{proof}

\section{Proofs of Theorem~\ref{main1} and Corollary~\ref{main2}}
\label{proof}

In this section, we prove Theorem~\ref{main1} and Corollary~\ref{main2}. However, almost all of the section consists of the proof of Theorem~\ref{main1}. The proof of this theorem is essentially partitioned into two lemmas, Lemmas~\ref{key2} and~\ref{key3}. Let $M$ be a $3$-connected matroid with a vertical $3$-separation $(X, \{e\}, Y)$ such that $Y\cup \{e\}$ is maximal. Lemma~\ref{key2} establishes Theorem~\ref{main1} for when $X$ contains at least one non-contractible element, while Lemma~\ref{key3} establishes the theorem for when every element in $X$ is contractible.

To prove Lemma~\ref{key2}, we will make use of the following technical result which is extracted from the proof of Lemma 3.2 in~\cite{Oxley et al}.

\begin{lemma}
\label{key1}
Let $M$ be a $3$-connected matroid with a vertical $3$-separation $(X_1, \{e_1\}, Y_1)$ such that $Y_1\cup \{e_1\}$ is maximal. Suppose that $(X_2, \{e_2\}, Y_2)$ is a vertical $3$-separation of $M$ such that $e_2\in X_1$, $e_1\in Y_2$, and $Y_2\cup \{e_2\}$ is closed. Then each of the following holds:
\begin{enumerate}[{\rm (i)}]
\item None of $X_1\cap X_2$, $X_1\cap Y_2$, $Y_1\cap X_2$, and $Y_1\cap Y_2$ are empty.

\item $r((X_1\cap X_2)\cup \{e_2\})=2$.

\item If $|Y_1\cap X_2|=1$, then $X_2$ is a rank-$3$ cocircuit.

\item If $|Y_1\cap X_2|\geq 2$, then $r((X_1\cap Y_2)\cup \{e_1, e_2\})=2$.
\end{enumerate}
\end{lemma}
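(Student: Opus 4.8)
The plan is to work throughout with the four cells $P_{11}=X_1\intersect X_2$, $P_{12}=X_1\intersect Y_2$, $P_{21}=Y_1\intersect X_2$, and $P_{22}=Y_1\intersect Y_2$, together with the distinguished elements, so that $X_1=P_{11}\union P_{12}\union\{e_2\}$, $Y_1=P_{21}\union P_{22}$, $X_2=P_{11}\union P_{21}$, and $Y_2=P_{12}\union P_{22}\union\{e_1\}$. First I would record the facts that drive everything. Since $Y_1\union\{e_1\}$ is maximal, the remark following Lemma~\ref{vertical2} gives that $Y_1\union\{e_1\}=\cl(Y_1)$ is closed, and by hypothesis $Y_2\union\{e_2\}=\cl(Y_2)$ is closed. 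Moreover each of $X_1\union\{e_1\}$, $Y_1$, $X_2\union\{e_2\}$, and $Y_2$ is exactly $3$-separating, and since $e_i\in\cl(X_i)\intersect\cl(Y_i)$ one computes $\sqcap(X_i,Y_i)=r(X_i)+r(Y_i)-r(M)=2$ for $i\in\{1,2\}$.

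For part (i) I would assume in turn that a cell is empty and derive a contradiction. If $P_{11}=\emptyset$ then $X_2\subseteq Y_1$, and if $P_{12}=\emptyset$ then $Y_2\subseteq Y_1\union\{e_1\}$; in either case $e_2\in\cl(X_2)\intersect\cl(Y_2)$ would lie in the closed set $\cl(Y_1)=Y_1\union\{e_1\}$, contradicting $e_2\in X_1$. If $P_{21}=\emptyset$ then $X_2\subseteq X_1$, so $Y_2\union\{e_2\}=E(M)-X_2$ properly contains $Y_1\union\{e_1\}$ (it contains $e_2$), and $(X_2,\{e_2\},Y_2)$ contradicts the maximality of $Y_1\union\{e_1\}$. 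The remaining case $P_{22}=\emptyset$ is the delicate one: here $Y_1\subseteq X_2$ forces $e_1\in\cl(Y_1)\subseteq\cl(X_2)$, and the naive uncrossing of $Y_1\union\{e_1\}$ with $Y_2\union\{e_2\}$ degenerates (the union becomes all of $E(M)$), so this case must be ruled out by the same maximality mechanism used for part (ii), by producing a vertical $3$-separation whose $Y$-side strictly contains $Y_1\union\{e_1\}$.

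For part (ii) the supporting step is to uncross $X_1\union\{e_1\}$ and $X_2\union\{e_2\}$: their intersection is $P_{11}\union\{e_2\}$ and the complement of their union is $P_{22}$, so Lemma~\ref{uncrossing} shows $P_{11}\union\{e_2\}$ is $3$-separating (with some bookkeeping when $|P_{22}|=1$). The heart of the matter is upgrading this to $r((X_1\intersect X_2)\union\{e_2\})=2$, and I expect this to be the main obstacle: pure uncrossing and submodularity only deliver the statement ``$3$-separating'' and never the rank-$2$ collapse, and the obvious attempt to feed the vertical $3$-separation $(P_{11}\union\{e_2\},E(M)-(P_{11}\union\{e_2\}))$ back into maximality stalls because one cannot freely promote $e_2$ (or $e_1$) to a guts element of it. The rank equality must instead be extracted by tracking the closures of the distinguished elements carefully and invoking maximality to forbid a strictly larger vertical $3$-separation of $M$; this is presumably exactly why the lemma is isolated from the longer argument of~\cite{Oxley et al}.

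Parts (iii) and (iv) should then follow more routinely. Given (ii), $r(X_1\intersect X_2)=2$, so when $|Y_1\intersect X_2|=1$, say $Y_1\intersect X_2=\{p\}$, the vertical condition $r(X_2)\ge 3$ forces $p\notin\cl(X_1\intersect X_2)$ and hence $r(X_2)=3$; then $\lambda(X_2)=2$ gives $r(E(M)-X_2)=r(M)-1$, so $E(M)-X_2$ is a hyperplane and $X_2$ is a rank-$3$ cocircuit, proving (iii). For (iv) I would rerun the part-(ii) argument on the cell $P_{12}=X_1\intersect Y_2$: uncrossing $X_1\union\{e_1\}$ with $Y_2\union\{e_2\}$ gives intersection $P_{12}\union\{e_1,e_2\}$ with the complement of the union equal to $P_{21}=Y_1\intersect X_2$, so the hypothesis $|Y_1\intersect X_2|\ge 2$ is precisely what lets Lemma~\ref{uncrossing}(ii) apply and show $P_{12}\union\{e_1,e_2\}$ is $3$-separating; the same maximality argument as in (ii) then forces $r((X_1\intersect Y_2)\union\{e_1,e_2\})=2$.
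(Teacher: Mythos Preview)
The paper does not actually prove this lemma; it is stated as a technical result ``extracted from the proof of Lemma~3.2 in~\cite{Oxley et al}'' with no further argument, so there is no in-paper proof to compare against directly. Your outline is largely sound, and parts~(iii) and~(iv), as well as three of the four cells in~(i), go through as you describe.

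The genuine gap is in part~(ii), precisely where you say the argument ``stalls because one cannot freely promote $e_2$ (or $e_1$) to a guts element''. This is where you have misdiagnosed the difficulty: the promotion of $e_2$ works immediately. Since $e_2\in\cl(Y_2)$ and $Y_2\subseteq E(M)-(P_{11}\cup\{e_2\})$, you get $e_2\in\cl\bigl(E(M)-(P_{11}\cup\{e_2\})\bigr)$ for free, and hence $e_2\notin\cl^*(P_{11})$ by Lemma~\ref{orthogonality}. Now a one-line connectivity calculation shows that if $e_2\notin\cl(P_{11})$ then $\lambda_M(P_{11})\le\lambda_M(P_{11}\cup\{e_2\})-1\le 1$, forcing $|P_{11}|\le 1$ by $3$-connectedness and giving $r(P_{11}\cup\{e_2\})=2$ directly. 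Otherwise $e_2\in\cl(P_{11})$, and if $r(P_{11}\cup\{e_2\})\ge 3$ then $(P_{11},\{e_2\},E(M)-(P_{11}\cup\{e_2\}))$ is a vertical $3$-separation whose large side equals $Y_1\cup\{e_1\}\cup P_{12}$ with $P_{12}\neq\emptyset$, contradicting maximality. Exactly the same mechanism, with $e_1\in\cl(Y_1)$ playing the role of the guts element, finishes~(iv).

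Two smaller repairs. First, uncross $X_1$ with $X_2\cup\{e_2\}$ rather than $X_1\cup\{e_1\}$ with $X_2\cup\{e_2\}$: the intersection is still $P_{11}\cup\{e_2\}$, but the complement of the union becomes $P_{22}\cup\{e_1\}$, so you only need $|P_{22}|\ge 1$ and the ``bookkeeping when $|P_{22}|=1$'' disappears. Second, the deferred cell $P_{22}=\emptyset$ in part~(i) is dispatched as follows: $Y_1\subseteq X_2$ gives $e_1\in\cl(Y_1)\subseteq\cl(X_2)$, so applying Lemma~\ref{vertical2} to $(Y_2,\{e_2\},X_2)$ yields a vertical $3$-separation whose large side $\cl(X_2)$ contains $Y_1\cup\{e_1,e_2\}\supsetneq Y_1\cup\{e_1\}$, again contradicting maximality.
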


\begin{lemma}
\label{key2}
Let $M$ be a $3$-connected matroid with a vertical $3$-separation $(X_1, \{e_1\}, Y_1)$ such that $Y_1\cup \{e_1\}$ is maximal. \blue{Suppose that} at least one element of $X_1$ is not contractible. \blue{Then at least one of the following holds:}
\begin{enumerate}[{\rm (i)}]
\item $X_1$ has at least two elastic elements;
\item $X_1\cup \{e_1\}$ is a $4$-element fan; \blue{or}
\item \blue{$X_1$ is contained in a $\Theta$-separator $S$}.
\end{enumerate}
\blue{Moreover, if (iii) holds, then $X_1$ is a rank-$3$ cocircuit, $M^*|S$ is isomorphic to either $\Theta_n$ or $\Theta^-_n$, where $n=|X_1\cup \{e_1\}|-1$, and there is a unique element $x\in X_1$ such that $x$ is a segment element of $M^*|S$ and $(X_1-\{x\})\cup \{e_1\}$ is the set of cosegment elements of $M^*|S$.}
%\blue{Moreover, in the latter instance, $X_1$ contains no elastic elements if $M^*|L\cup S\cong\Theta_n$, or a unique elastic element if $M^*|L\cup S\cong\Theta_n \delete x_n$}
\end{lemma}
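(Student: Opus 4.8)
The plan is to set up, for the given non-contractible element of $X_1$, a vertical $3$-separation of $M$ and apply Lemma~\ref{key1} to extract structural information. First I would take $e_2\in X_1$ that is not contractible; by Lemma~\ref{vertical1}, $M$ has a vertical $3$-separation $(X_2,\{e_2\},Y_2)$, and by Lemma~\ref{vertical2} I may assume $Y_2\cup\{e_2\}$ is closed and that $e_1\in Y_2$ (using that $Y_1\cup\{e_1\}$ is closed by maximality, so I can arrange the separation so $e_1$ falls on the $Y_2$ side). This puts me exactly in the hypotheses of Lemma~\ref{key1}, which tells me that all four intersections $X_1\cap X_2$, $X_1\cap Y_2$, $Y_1\cap X_2$, $Y_1\cap Y_2$ are non-empty, that $r((X_1\cap X_2)\cup\{e_2\})=2$, and gives a dichotomy on $|Y_1\cap X_2|$.

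The heart of the argument is a case split on $|Y_1\cap X_2|$. In the case $|Y_1\cap X_2|=1$, Lemma~\ref{key1}(iii) says $X_2$ is a rank-$3$ cocircuit, and $r((X_1\cap X_2)\cup\{e_2\})=2$ makes $X_1\cap X_2$ (together with $e_2$) a segment sitting inside this cocircuit; I would then invoke Lemma~\ref{triangle} and Lemma~\ref{contract+} (the segment lemmas of Section~\ref{sec: segments}) to count contractible/elastic elements inside this segment, aiming to produce two elastic elements in $X_1$ (outcome (i)) unless a degenerate configuration forces a $4$-element fan (outcome (ii)) or the rigid segment-cosegment structure of a $\Theta$-separator (outcome (iii)). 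In the case $|Y_1\cap X_2|\geq 2$, Lemma~\ref{key1}(iv) gives $r((X_1\cap Y_2)\cup\{e_1,e_2\})=2$, so now both $X_1\cap X_2$ and $X_1\cap Y_2$ are rank-$2$ pieces glued along $e_1,e_2$; combined with $X_1$ being $3$-separating this is precisely the skeleton of a $\Theta_n$ or $\Theta^-_n$ restriction, and I would verify the circuit structure (i)--(iii) from the definition of $\Theta_n$ directly, showing $X_1$ is a rank-$3$ cocircuit whose dual restriction is a theta matroid.

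For the ``Moreover'' clause I would show that, in the $\Theta$-separator outcome, $X_1$ is a rank-$3$ cocircuit: the two rank-$2$ sets meeting along $\{e_1,e_2\}$ force $r^*(X_1)=3$ via the connectivity function $\lambda_M(X_1)=2$ together with $r(X_1)$ being large, and then the cosegment/segment labelling in $M^*$ falls out by matching the circuits of $M^*|S$ against the three circuit types in the definition of $\Theta_n$. The uniqueness of the segment element $x$ and the identification of $(X_1-\{x\})\cup\{e_1\}$ as the cosegment set would come from noting that exactly one element of $X_1\cup\{e_1\}$ plays the role of a $w_i$ on the appropriate side, with $e_1$ joining the cosegment $Z$ in the dual; here $n=|X_1\cup\{e_1\}|-1$ because $S$ is one element larger than $X_1\cup\{e_1\}$ only when the full $\Theta_n$ is present, and is equal to it in the $\Theta^-_n$ case.

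The main obstacle I expect is the case $|Y_1\cap X_2|=1$: there the separation is ``thin'' on the $Y_1$ side, and I must carefully disentangle when the rank-$3$ cocircuit $X_2$ yields two genuinely elastic elements versus collapsing into a fan or a theta. This is where the segment lemmas of Section~\ref{sec: segments} do the real work, and where orthogonality arguments (Lemma~\ref{orthogonality}) must be used repeatedly to control which elements lie in $\cl$ and $\cl^*$ of the relevant pieces; managing the interaction between the maximality of $Y_1\cup\{e_1\}$ and the closure of $Y_2\cup\{e_2\}$ so as to rule out spurious smaller separations is the delicate point.
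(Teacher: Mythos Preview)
Your overall framework matches the paper's: set up $(X_2,\{e_2\},Y_2)$ via Lemma~\ref{vertical1}, normalise with Lemma~\ref{vertical2}, apply Lemma~\ref{key1}, and split on $|Y_1\cap X_2|$. However, you have the outcomes allocated to the wrong cases, and this matters for the argument.

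In the paper, case $|Y_1\cap X_2|=1$ \emph{always} yields outcome~(i); neither the fan nor the $\Theta$-separator arises there. The work in that case is to show that at most one element of the segment $(X_1\cap X_2)\cup\{e_2\}$ fails to be contractible (this is where Lemma~\ref{contract+} is used, together with the maximality of $Y_1\cup\{e_1\}$), and then a separate sublemma---if $X_1\cap X_2$ has two contractible elements then either $X_1$ has two elastic elements or a specific triangle exists---finishes it off, the triangle option leading to a $2$-separation contradiction. Your expectation that this is the hard case is right in spirit, but the resolution is not ``fan or theta'': it is always two elastic elements.

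All three outcomes live in case $|Y_1\cap X_2|\geq 2$. Here $X_1$ is indeed a rank-$3$ cocircuit (because the two rank-$2$ segments $L_1=(X_1\cap X_2)\cup\{e_2\}$ and $L_2=(X_1\cap Y_2)\cup\{e_1,e_2\}$ force $r(X_1)=3$, hence $Y_1\cup\{e_1\}$ is a hyperplane). But the $\Theta$-structure is \emph{not} immediate from having two rank-$2$ pieces glued at $e_2$. One must further split on $|L_2|$ and $|L_1|$: if $|L_2|=3$ and $|X_1\cap X_2|=1$ you get the $4$-element fan; if $|L_2|\geq 4$ and $|L_1|\geq 3$, or $|L_2|=3$ and $|X_1\cap X_2|\geq 2$, Lemma~\ref{triangle} and the contractible-pair sublemma give two elastic elements. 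The $\Theta$-separator arises only when $|L_2|\geq 4$, $|X_1\cap X_2|=1$, and at most one element of $X_1\cap Y_2$ is contractible. In that sub-subcase the key step---missing from your plan---is to apply Lemma~\ref{contract+} (to the rank-$3$ cocircuit $(L_2-\{e_1\})\cup\{w_1\}$, where $\{w_1\}=X_1\cap X_2$) to locate \emph{new} elements $w_2,\ldots,w_m\in Y_1$ with $(L_2-\{e_i\})\cup\{w_i\}$ a cocircuit for each $i$. These $w_i$, together with $w_1$ and $L_2$, form $S$; the circuit comparison with $\Theta_n$ then goes through.

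This also corrects your size count in the ``Moreover'' clause: $S=W\cup L_2$ with $W=\{w_1,\ldots,w_m\}$ and $m\in\{n-1,n\}$, so $|S|$ is roughly $2n$, not $|X_1\cup\{e_1\}|\pm 1$. The unique segment element of $M^*|S$ lying in $X_1$ is $w_1$, and $(X_1-\{w_1\})\cup\{e_1\}=L_2$ is the cosegment set; the parameter is $n=|L_2|=|X_1|=|X_1\cup\{e_1\}|-1$.
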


\begin{proof}
Let $e_2$ be an element of $X_1$ that is not contractible. Then, by Lemma~\ref{vertical1}, there exists a vertical $3$-separation $(X_2, \{e_2\}, Y_2)$ of $M$. Without loss of generality, we may assume $e_1\in Y_2$. Furthermore, by Lemma~\ref{vertical2}, we may also assume that $Y_2\cup \{e_2\}$ is closed. By Lemma~\ref{key1}, each of $X_1\cap X_2$, $X_1\cap Y_2$, $Y_1\cap X_2$, and $Y_1\cap Y_2$ is non-empty. The proof is partitioned into two cases depending on the size of $Y_1\cap X_2$. Both cases use the following:

\begin{sublemma}
\label{triangle exists}
If $X_1\cap X_2$ contains two contractible elements, then either $X_1$ has at least two elastic elements, or $|X_1\cap X_2|=2$ and there exists a triangle $\{x, y_1, y_2\}$, where $x\in X_1\cap X_2$, $y_1\in Y_1\cap X_2$, and $y_2\in X_1\cap Y_2$.
\end{sublemma}

By Lemma~\ref{key1}(ii), $r((X_1\cap X_2)\cup \{e_2\})=2$. Let $x_1$ and $x_2$ be \blue{distinct} contractible elements of $X_1\cap X_2$. If $|X_1\cap X_2|\ge 3$, then, by Lemma~\ref{delete1} each of $x_1$ and $x_2$ is elastic. Thus we may assume that $|X_1\cap X_2|=2$ and that either $x_1$ or $x_2$, say $x_1$, is not deletable. Let $(U, V)$ be a $2$-separation of $M\delete x_1$ such that neither $r^*(U)=1$ nor $r^*(V)=1$. Since $x_1$ is not deletable, such a separation exists. \blue{Furthermore,} \blue{$|U|, |V|\ge 3$} \blue{as $U$ and $V$ each contain a cycle.} If $x_1\in \cl(U)$ or $x_1\in \cl(V)$, then either $(U\cup \{x_1\}, V)$ or $(U, V\cup \{x_1\})$, respectively, is a $2$-separation of $M$, a contradiction. So $\{x_2, e_2\}\not\subseteq U$ and $\{x_2, e_2\}\not\subseteq V$. Therefore, without loss of generality, we may assume $x_2\in U-\cl(V)$ and $e_2\in V-\cl(U)$. Since $(U, V)$ is a $2$-separation of $M\delete x_1$ and $x_2\not\in \cl(V)$, we deduce that $(U-\{x_2\}, V\cup \{x_1\})$ is a $2$-separation of $M/x_2$. Thus, as $x_2$ is contractible, $\si(M/x_2)$ is $3$-connected, and so $r(U)=2$. In turn, as $Y_1\cup \{e_1\}$ and $Y_2\cup \{e_2\}$ are both closed, this implies that $|U\cap (Y_1\cup \{e_1\})|\le 1$ and $|U\cap (Y_2\cup \{e_2\})|\le 1$; otherwise, $U\subseteq Y_1\cup \{e_1\}$ or $U\subseteq Y_2\cup \{e_2\}$. Thus $|U|=3$ and, in particular, $U$ is the desired triangle. Hence~\blue{(}\ref{triangle exists}\blue{)} holds.

We now distinguish two cases depending on the size of $Y_1\cap X_2$:
\begin{enumerate}[(I)]
\item $|Y_1\cap X_2|=1$; and
		
\item $|Y_1\cap X_2|\geq 2$.
\end{enumerate}

Consider (I). Let $w$ be the unique element in $Y_1\cap X_2$. By Lemma~\ref{key1}, $(X_1\cap X_2)\cup \{e_2\}$ is a segment of at least three elements and $(X_1\cap X_2)\cup \{w\}$ is a rank-$3$ cocircuit. Let $L_1=(X_1\cap X_2)\cup \{e_2\}$. As $|Y_1\cap X_2|=1$, we may assume that $L_1$ is closed.

\begin{sublemma}
\blue{At most one element of $X_1\cap X_2$ is not contractible.}
\label{mostone}
\end{sublemma}

\blue{Suppose that at least two elements in $X_1\cap X_2$ are not contractible, and let $x$ be such an element. Then, by Lemma~\ref{contract+}, there is an element $w'$ distinct from $w$ such that $(L_1-\{x\})\cup \{w'\}$ is a rank-$3$ cocircuit. If $w'\in Y_1$, then $\{w, w'\}\subseteq \cl^*(X_1)$ and $e_1\in \cl(X_1)$, contradicting Lemma~\ref{notmany}. Thus $w'\in X_1$. Since $w'\in \cl^*(L_1-\{x\})$, it follows by Lemma~\ref{3sep1} that each of $(L_1-\{x\})\cup \{w'\}$ and $L_1\cup \{w'\}$ are exactly $3$-separating. Furthermore, as $x\in \cl((L_1-\{x\})\cup \{w'\})$, it follows by Lemma~\ref{3sep2} that $x\not\in \cl^*((L_1-\{x\})\cup \{w'\})$. Therefore
$$((L_1-\{x\})\cup \{w'\}, \{x\}, E(M)-(L_1\cup \{w'\}))$$
is a vertical $3$-separation of $M$. But then, as $L_1\cup \{w'\}\subseteq X_1$, we contradict the maximality of $Y_1\cup \{e_1\}$. Hence~(\ref{mostone}) holds.}

If $|L_1|\ge 4$, then, \blue{by Lemma~\ref{delete1} and~(\ref{mostone}), $L_1-\{e_2\}$, and more particularly $X_1$,} contains at least two elastic elements. Thus, as $|Y_1\cap X_2|=1$, we may assume $|L_1|=3$, and so $(L_1-\{e_2\})\cup \{w\}$ is a triad. Let $L_1=\{x_1, x_2, e_2\}$ and let $\{i, j\}=\{1,2\}$.

\begin{sublemma}
For each $i\in \{1, 2\}$, the element $x_i$ is contractible.
\label{contractx}
\end{sublemma}

If $x_i$ is not contractible, then, by Lemma~\ref{vertical1}, $M$ has a vertical $3$-separation $(U_i, \{x_i\}, V_i)$, where $e_1\in V_i$. By Lemma~\ref{vertical2}, we may assume that $V_i\cup x_i$ is closed. By Lemma~\ref{key1}, $Y_1\cap U_i$ is non-empty and $r((X_1\cap U_i)\cup \{x_i\})=2$. First assume that $|Y_1\cap U_i|=1$. Then $|(X_1\cap U_i)\cup \{x_i\}|\ge 3$, and so $x_i$ is contained in a triangle $T\subseteq (X_1\cap U_i)\cup \{x_i\}$. If $x_j\in V_i$, then, as $V_i\cup \{x_i\}$ is closed, $e_2\in V_i$. Thus $x_j, e_2\not\in T$ and so, by orthogonality, as $\{x_i, x_j, w\}$ is a triad, $w\in T$. This contradicts $w\in Y_1$. It now follows that $x_j\in X_1\cap U_i$ and so $e_2\in X_1\cap U_i$. Thus, as $L_1$ is closed and $L_1\subseteq (X_1\cap U_i)\cup \{x_i\}$,
we have $|(X_1\cap U_i)\cup \{x_i\}|=3$, and therefore $T=\{x_1, x_2, e_2\}$. Let $z$ be the unique element in $Y_1\cap U_i$. Then, by Lemma~\ref{key1} again, $\{\blue{x_j}, e_2, z\}$ is a triad, and so $z\in \cl^*(X_1)$. Furthermore, $w\in \cl^*(X_1)$ and $e_1\in \cl(X_1)$, and so, by Lemma~\ref{notmany}, we deduce that $z=w$. This implies that $Y_2=V_i$. But then $\cl(Y_2\cup \{e_2\})$ contains $x_i$, contradicting that $Y_2\cup \{e_2\}$ is closed. Now assume that $|Y_1\cap U_i|\ge 2$. By Lemma~\ref{key1}, $r((X_1\cap V_i)\cup \{x_i, e_1\})=2$. If $x_j\in V_i$, then, as $V_i\cup \{x_i\}$ is closed, $e_2\in X_1\cap V_i$, and so $\{x_j, e_1, e_2\}$ is a triangle. Since $\{x_1, x_2, w\}$ is a triad, this contradicts orthogonality. Thus $x_j\in \blue{U_i}$. Also, $e_2\in U_i$; otherwise, as $V_i\cup \{x_i\}$ is closed, $x_j\in V_i$, a contradiction. By Lemma~\ref{key1}, $X_1\cap V_i$ is non-empty, and so $M$ has a triangle $T'=\{x_i, e_1, y\}$, where $y\in X_1\cap V_i$. As $\{x_i, x_j, w\}$ is a triad, $T'$ contradicts orthogonality unless $y=w$. But $w\in Y_1$ and therefore cannot be in $X_1\cap V_i$. Hence $x_i$ is contractible, and so~(\ref{contractx}) holds.

Since $x_1$ and $x_2$ are both contractible, it follows by~(\ref{triangle exists}) that either $X_1$ contains two elastic elements or $w$ is in a triangle with two elements of $X_1$. If the latter holds, then $w\in \cl(X_1)$. As $\{x_1, x_2, w\}$ is a triad and $(Y_1\cup \{e_1\})-\{w\}$ is contained in $Y_2\cup e_2$, it follows that $w\not\in \cl((Y_1\cup \{e_1\})-\{w\})$. Therefore
$$(X_1\cup \{w\}, (Y_1\cup \{e_1\})-\{w\})$$
is a $2$-separation of $M$, a contradiction. Thus $X_1$ contains two elastic elements. This concludes (I).

Now consider (II). Let $L_1=(X_1\cap X_2)\cup \{e_2\}$ and $L_2=(X_1\cap Y_2)\cup \{e_1, e_2\}$. By parts (ii) and (iv) of Lemma~\ref{key1}, $L_1$ and $L_2$ are both segments. Since $M$ is $3$-connected, $X_1$ is $3$-separating, and $Y_1\cup \{e_1\}$ is closed, it follows that $X_1$ is a rank-$3$ cocircuit of $M$ \blue{and $L_2$ is closed.}

\blue{First assume that} $|L_2|\ge 4$. \blue{Since $X_1$ is a rank-$3$ cocircuit of $M$, we have $r(Y_1)+1=r(M)$. Therefore, as $|L_2|\ge 4$ and $|X_1\cap X_2|\ge 1$, it follows that $r^*(M)\ge 4$. Now,} \blue{Lemma~\ref{delete1} implies that each element of $L_2$ is deletable.} If $|L_1|\ge 3$, then, by Lemma~\ref{triangle}, each element of $L_2-\{e_1, e_2\}$ is contractible, and so each element of $L_2-\{e_1, e_2\}$ is elastic. Since $|L_2|\ge 4$, it follows that $X_1$ has at least two elastic elements. Thus we may assume that $|L_1|=2$, that is $|X_1\cap X_2|=1$. \blue{We may also assume that $X_1\cap Y_2$ contains at most one contractible element; otherwise, $X_1$ contains at least two elastic elements. Let $e_3, e_4, \ldots, e_n$ denote the elements in $L_1-\{e_1, e_2\}$. Without loss of generality, we may assume that if $X_1\cap Y_2$ contains a contractible element, then it is $e_n$. Let $m=n-1$ if $e_n$ is contractible; otherwise, let $m=n$. Furthermore, let $w_1$ denote the unique element in $X_1\cap X_2$. Since $(L_2-\{e_1\})\cup \{w_1\}$ is a rank-$3$ cocircuit, and at most one element of $L_2-\{e_1\}$ is contractible, it follows by Lemma~\ref{contract+} that, for all $i\in \{2, 3, \ldots, m\}$, there are distinct elements $w_2, w_3, \ldots, w_m$ of $Y_1$ such that $(L_2-\{e_i\})\cup \{w_i\}$ is a cocircuit. Let $W=\{w_1, w_2, \ldots, w_m\}$. As $W$ is in the coclosure of the $3$-separating set $L_2$, we have $r^*(W)=2$. It follows that $(L_2-\{e_i\})\cup \{w_j, w_k\}$ is a cocircuit of $M$ for all distinct elements $i, j, k\in \{1, 2, \ldots, m\}$. By a comparison of the circuits of $\Theta_n$, it is straightforward to deduce that $M^*|(W\cup L_2)$ is isomorphic to either $\Theta_n$ if no element of $X_1\cap Y_2$ is contractible, or $\Theta^-_n$ if $e_n$ is contractible. Hence $X_1$ is contained in a $\Theta$-separator of $M$ as described in the statement of the lemma.}

We may now assume that $|L_2|=3$.
Let $L_2=\{e_2, a, e_1\}$. If $|X_1\cap X_2|=1$, then $|X_1|=3$, and so $X_1$ is a triad. In turn, this implies that $X_1\cup \{e_1\}$ is a $4$-element fan. Thus $|X_1\cap X_2|\ge 2$.
%Furthermore, if $|X_1\cap X_2|\ge 3$, then $|L_1|\ge 4$ and so, as $a\in \cl^*(L_1)$, it follows by Corollary~\ref{elastic1} that $X_1$ has at least two elastic elements. Hence, we may assume $|X_1\cap X_2|=2$, and so $|L_1|=3$. Let $L_1=\{x_1,x_2,e_2\}$.
Let $x_1$ and $x_2$ be distinct elements in $X_1\cap X_2$. Since $\{e_1, a, e_2\}$ is a triangle in $M/x_i$ for each $i\in\{1,2\}$, it follows by Lemma~\ref{triangle} that $x_i$ is contractible for each $i\in\{1,2\}$. Thus, by~(\ref{triangle exists}), either $X_1$ contains two elastic elements, or $X_1\cap X_2=\{x_1, x_2\}$ and $a$ is in a triangle with two elements of $X_2$. The latter implies that $a\in \cl(X_2\cup \{e_2\})$. As $a\not\in \cl(Y_1\cup \{e_1\})$ and $Y_2-\{a\}$ is contained in $Y_1\cup \{e_1\}$, it follows that $a\not\in \cl(Y_2-\{a\})$. Hence, \blue{as
$$r(X_2\cup \{e_2\})+r(Y_2)-r(M)=2,$$
we have $r(X_2\cup \{e_2, a\})+r(Y_2-\{a\})+1-r(M)=2$, and so}
$$(X_2\cup \{a, e_2\},Y_2-\{a\})$$
is a $2$-separation of $M$, a contradiction. Thus $X_1$ contains two elastic elements. This concludes (II) and the proof of the lemma.
\end{proof}

\begin{lemma}
\label{key3}
Let $M$ be a $3$-connected matroid with a vertical $3$-separation $(X_1, \{e_1\}, Y_1)$ such that $Y_1\cup \{e_1\}$ is maximal. \blue{Suppose that} every element of $X_1$ is contractible. \blue{Then at least one of the following holds:}
\begin{enumerate}[{\rm (i)}]
\item $X_1$ has at least two elastic elements;

\item $X_1\cup \{e_1\}$ is a $4$-element fan; \blue{or}

\item \blue{$X_1$ is contained in a $\Theta$-separator $S$}.
\end{enumerate}
\blue{Moreover, if (iii) holds, then $X_1\cup \{e_1\}$ is a circuit, $M|S$ is isomorphic to either $\Theta_n$ or $\Theta^-_n$ for some $n\in \{|X_1|, |X_1|+1\}$, and $X_1$ is a subset of the cosegment elements of $M|S$.}
%\blue{Moreover, in the latter instance, $X_1$ contains no elastic elements if $M|L\cup S\cong\Theta_n$, or a unique elastic element if $M|L\cup S\cong\Theta_n \delete x_n$}
\end{lemma}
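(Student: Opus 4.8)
The plan is to dualise the setup so that the work done in Lemma~\ref{key2} can be reused almost verbatim. Since every element of $X_1$ is contractible, the interesting obstruction must come from the deletion side, so I would pass to $M^*$. In $M^*$, the vertical $3$-separation $(X_1,\{e_1\},Y_1)$ of $M$ becomes a cyclic $3$-separation, and by Lemma~\ref{dual} a cyclic $3$-separation of $M$ is exactly a vertical $3$-separation of $M^*$. The hypothesis that every element of $X_1$ is contractible in $M$ translates, via Lemma~\ref{vertical1}, into the statement that every element of $X_1$ is deletable in $M^*$. What I really want, though, is to guarantee that $X_1$ contains a non-contractible element of $M^*$ so that Lemma~\ref{key2} applies to $M^*$; the point is that if every element of $X_1$ were both deletable and contractible in $M^*$ then (after translating back) $X_1$ would already contain two elastic elements of $M$ and outcome~(i) would hold. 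So the first step is to dispose of that easy case and otherwise fix a non-contractible element of $M^*$ lying in $X_1$.

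The second step is to check that the hypotheses of Lemma~\ref{key2}, applied to $M^*$, are genuinely met. I need $(X_1,\{e_1\},Y_1)$ to be a vertical $3$-separation of $M^*$ with $Y_1\cup\{e_1\}$ maximal. The separation being vertical in $M^*$ is exactly Lemma~\ref{dual} applied to the given cyclic $3$-separation. Maximality of $Y_1\cup\{e_1\}$ in the vertical sense in $M^*$ should follow from maximality of $Y_1\cup\{e_1\}$ in $M$ together with Lemma~\ref{dual}, since that lemma gives a bijection between the cyclic $3$-separations of $M$ and the vertical $3$-separations of $M^*$ over the same partition; I would spell out that a strictly larger vertical $3$-separating side in $M^*$ would give a strictly larger cyclic $3$-separating side in $M$, contradicting maximality. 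With these in place, Lemma~\ref{key2} yields, for $M^*$, that either $X_1$ has two elastic elements of $M^*$, or $X_1\cup\{e_1\}$ is a $4$-element fan of $M^*$, or $X_1$ is contained in a $\Theta$-separator $S$ with the stated structural ``moreover'' conclusion.

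The third step is to translate each outcome back to $M$. Elastic is self-dual: $e$ is elastic in $M$ iff it is elastic in $M^*$, since deletable and contractible swap under duality. Hence two elastic elements of $M^*$ in $X_1$ give outcome~(i) for $M$. A $4$-element fan is self-dual as a structure (triangles and triads swap but fan-hood is preserved), so a $4$-element fan $X_1\cup\{e_1\}$ of $M^*$ is a $4$-element fan of $M$, giving~(ii). For the third outcome, $S$ being a $\Theta$-separator of $M^*$ is, by the definition, the same as $S$ being a $\Theta$-separator of $M$, because the definition is phrased symmetrically in $M$ and $M^*$; thus~(iii) holds for $M$. The ``moreover'' clause of Lemma~\ref{key2} says that for $M^*$ the set $X_1$ is a rank-$3$ cocircuit and $M^{**}|S=M|S$ is isomorphic to $\Theta_n$ or $\Theta^-_n$ with $(X_1-\{x\})\cup\{e_1\}$ the cosegment elements. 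Dualising, ``$X_1$ is a rank-$3$ cocircuit of $M^*$'' becomes ``$X_1$ is a rank-$3$ circuit of $M$'', which combined with $e_1\in\cl(X_1)$ (from the vertical $3$-separation of $M$) gives that $X_1\cup\{e_1\}$ is a circuit of $M$; and the cosegment elements of $M^*|S$ are precisely the cosegment elements of $M|S$ under the self-duality of $\Theta_n$, so $X_1$ sits inside the cosegment elements of $M|S$ and $n\in\{|X_1|,|X_1|+1\}$ as required.

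The main obstacle I anticipate is not any single deep step but the bookkeeping of the duality dictionary: being careful that ``deletable/contractible'', ``vertical/cyclic'', and ``circuit/cocircuit'' are each consistently swapped, and in particular that the ``moreover'' data of Lemma~\ref{key2} (rank-$3$ cocircuit, which $x$ is the segment element, which elements are cosegment elements) is re-expressed correctly for $M$ rather than accidentally stated for $M^*$. The one genuinely substantive verification is that maximality of $Y_1\cup\{e_1\}$ transfers cleanly between the cyclic picture in $M$ and the vertical picture in $M^*$; once that is nailed down, everything else is a mechanical application of Lemma~\ref{key2} followed by the self-duality of the three outcomes.
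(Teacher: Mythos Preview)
Your dualisation plan breaks at the very first step. A vertical $3$-separation $(X_1,\{e_1\},Y_1)$ of $M$ requires $e_1\in\cl_M(X_1)\cap\cl_M(Y_1)$; by Lemma~\ref{orthogonality} this forces $e_1\notin\cl^*_M(X_1)$ and $e_1\notin\cl^*_M(Y_1)$, so $e_1$ is \emph{not} in $\cl_{M^*}(X_1)\cap\cl_{M^*}(Y_1)$. Thus the triple $(X_1,\{e_1\},Y_1)$ is not a vertical $3$-separation of $M^*$, and Lemma~\ref{key2} cannot be invoked for $M^*$ on this triple. (Lemma~\ref{dual} concerns pairs $(X,Y)$, not triples with a guts element; the guts condition is exactly what fails to dualise.) Concretely, when $X_1$ is independent we have $r^*_M(X_1)=2$, so $(X_1,Y_1\cup\{e_1\})$ is not even a vertical $3$-separation of $M^*$ as a pair. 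The maximality transfer is likewise unfounded: maximality of $Y_1\cup\{e_1\}$ is with respect to vertical $3$-separations of $M$, whereas what you would need is maximality with respect to vertical $3$-separations of $M^*$ (equivalently, cyclic $3$-separations of $M$), and these notions are unrelated. Finally, even ignoring all this, the ``moreover'' output of Lemma~\ref{key2} would give that $X_1$ (not $X_1\cup\{e_1\}$) is a circuit of $M$ and that $n=|X_1|$, which does not match the conclusion of Lemma~\ref{key3}.

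The paper's proof is genuinely more work than a one-line dualisation. It splits on whether $X_1$ is independent. If $X_1$ is independent, then $r^*(X_1)=2$ and one argues directly (using the dual of Lemma~\ref{contract+}) to locate the $\Theta$-structure. If $X_1$ is dependent, one picks a circuit $C\subseteq X_1$, finds a non-deletable $g\in C$, and obtains a \emph{cyclic} $3$-separation $(U,\{g\},V)$ of $M$ --- this is where a vertical $3$-separation of $M^*$ actually arises, with $g$ (not $e_1$) as the guts element. One must then analyse how $(U,\{g\},V)$ interacts with $(X_1,\{e_1\},Y_1)$, and only in a particular subcase does the dual of Lemma~\ref{key2} apply, to a further-maximised cyclic separation $(U',\{h\},V')$. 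Reconciling that output with the maximality of $Y_1\cup\{e_1\}$ is the substantive content your plan skips.
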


\begin{proof}
First suppose that $X_1$ is independent. Then, as \blue{$r(X_1)=|X_1|$ and $\lambda(X_1)=r(X_1)+r^*(X_1)-|X_1|$, we have} $r^*(X_1)=2$. That is, $X_1$ is a segment in $M^*$. \blue{As $r^*(X_1)=2$, it follows that either $(X_1-\{x\})\cup \{e_1\}$ is a circuit for some $x\in X_1$, or $X_1\cup \{e_1\}$ is a circuit. If $(X_1-\{x\})\cup \{e_1\}$ is a circuit, then either $X_1\cup \{e_1\}$ is a $4$-element fan, or it is easily checked that $(X_1-\{x\}, \{e_1\}, Y_1\cup \{x\})$ is a vertical $3$-separation, contradicting the maximality of $Y_1\cup \{e_1\}$. Thus we may assume that $X_1\cup \{e_1\}$ is a circuit of $M$. Now, if two elements of $X_1$ are deletable, then $X_1$ contains at least two elastic elements, so we may assume that at most one element of $X_1$ is deletable. Assume first that $X_1$ is coclosed, and let $X_1=\{z_1, z_2, \ldots, z_n\}$. Without loss of generality, we may assume that if $X_1$ contains a deletable element, then it is $z_n$. Let $m=n-1$ if $z_n$ is deletable; otherwise, let $m=n$. Since $X_1\cup \{e_1\}$ has corank~$3$ and $X_1$ is coclosed, it follows by the dual of Lemma~\ref{contract+} that, for all $i\in \{1, 2, \ldots, m\}$, there are distinct elements $w_1, w_2, \ldots, w_m$ such that $(X_1-\{z_i\})\cup \{w_i\}$ is a circuit. Let $W=\{w_1, w_2, \ldots, w_m\}$. Since $X_1$ is $3$-separating and $W\subseteq \cl(X_1)$, it follows that $r(W)=2$. As every $3$-element subset of $X_1$ is a cocircuit, it follows by orthogonality that $(X_1-\{z_i\})\cup \{w_j, w_k\}$ is a circuit for all distinct $i, j, k\in \{1, 2, \ldots, m\}$. By a comparison with the circuits of $\Theta_n$, it is easily checked that $M|(W\cup X_1)$ is isomorphic to $\Theta_n$ if $m=n$, and $M|(W\cup X_1)$ is isomorphic to $\Theta^-_n$ if $m=n-1$, and so $X_1$ is contained in a $\Theta$-separator of $M$ as described in the statement of the lemma. Now assume that $X_1$ is not coclosed. Then, as $X_1\cup \{e_1\}$ is a corank-$3$ circuit, $|\cl^*(X_1)-X_1|=1$. Let $\{z_1\}=\cl^*(X_1)-X_1$, and denote the elements of $X_1$ as $z_2, z_3, \ldots, z_n$. Applying the previous argument to $X_1\cup \{z_1\}$ and recalling that $X_1\cup \{e_1\}$ is a circuit, we deduce that $X_1$ is again contained in a $\Theta$-separator of $M$ as described in the statement of the lemma.}

Now suppose that $X_1$ is dependent, and let $C$ be a circuit in $X_1$. As $M$ is $3$-connected, $|C|\ge 3$. If every element in $C$ is deletable, then $X_1$ contains at least two elastic elements. Thus we may assume that there is an element, \blue{say $g$}, in $C$ that is not deletable. By Lemma~\ref{vertical1}, there exists a cyclic $3$-separation $(U, \{g\}, V)$ in $M$, where $e_1\in V$. By Lemma~\ref{vertical2}, we may also assume that $V\cup \{g\}$ is coclosed. Note that, as $(U, \{g\}, V)$ is a cyclic $3$-separation, $r^*(U)\ge 3$, and so $|U|\ge 3$.

We next show that

\begin{sublemma}
$|X_1\cap U|, |X_1\cap V|\ge 2$.
\label{least2}
\end{sublemma}

If either $C-\{g\}\subseteq U$ or $C-\{g\}\subseteq V$, then $g\in \cl(U)$ or $g\in \cl(V)$, respectively, \blue{in which case either $(U\cup \{g\}, V)$ or $(U, V\cup \{g\})$ is a $2$-separation of $M$,} a contradiction. Thus $C\cap (X_1\cap U)$ and $C\cap (X_1\cap V)$ are both non-empty, and so $|X_1\cap U|, |X_1\cap V|\ge 1$. Say $X_1\cap U=\{g'\}$, where $g'\in C$. Since $C$ is a circuit, $g\in \cl_{M/g'}(V)$. Therefore, as $Y_1\cup \{e_1\}$ is closed and so $g'\not\in \cl(Y_1)$, and $(U, V)$ is a $2$-separation of $M\delete g$, we have
\begin{align*}
\lambda_{M/g'}(U\cap Y_1)&=r_{M/g'}(U\cap Y_1)+r_{M/g'}(V\cup \{g\}) -r(M/g') \\
& = r_M(U\cap Y_1)+r_M(V)-(r(M)-1) \\
& = r_M(U\cap Y_1)+r_M(V)-r(M\delete g)+1 \\
& = r_M(U)-1+r_M(V)-r(M\delete g)+1 \\
& = r_M(U)+r_M(V)-r(M\delete g) \\
& =1.
\end{align*}
Thus $(U\cap Y_1, V\cup \{g\})$ is a $2$-separation of $M/g'$. Since every element in $X_1$ is contractible, $g'$ is contractible, and so $r(U)=2$. Since $|U|\ge 3$, it follows that $|U\cap Y_1|\ge 2$, and so $g'\in \cl(Y_1\cup \{e_1\})$, a contradiction as $Y_1\cup \{e_1\}$ is closed. Hence $|X_1\cap U|\ge 2$. An identical argument interchanging the roles of $U$ and $V$ establishes that $|X_1\cap V|\ge 2$, thereby establishing~(\ref{least2}).

\blue{Say $|Y_1\cap U|\ge 2$. It follows by two application of uncrossing that each of $(X_1\cap V)\cup\{g\}$ and $(X_1\cap V)\cup\{g,e_1\}$ is $3$-separating. Since $|X_1\cap V|\ge 2$ and $M$ is $3$-connected, $(X_1\cap V)\cup\{g\}$ and $(X_1\cap V)\cup\{g, e_1\}$ are exactly $3$-separating. Therefore, by Lemma~\ref{3sep1}, $e_1\in \cl((X_1\cap V)\cup\{g\})$ or $e_1\in \cl^*((X_1\cap V)\cup\{g\})$. Since $e_1\in \cl(Y_1)$, it follows by Lemma~\ref{orthogonality} that $e_1\not\in \cl^*((X_1\cap V)\cup \{g\})$. So $e_1\in \cl((X_1\cap V)\cup\{g\})$. Thus, if $r((X_1\cap V)\cup\{g\})\ge 3$, then $((X_1\cap V)\cup\{g\}, \{e_1\}, Y_1\cup U)$ is a vertical $3$-separation, contradicting the maximality of $Y_1\cup \{e_1\}$. Therefore $r((X_1\cap V)\cup\{e_1,g\})=2$. But then $g\in \cl(V\cap X_1)\subseteq \cl(V)$, a contradiction.}
	
\blue{Now assume that $|Y_1\cap U|\le 1$.}  Say $Y_1\cap U$ is empty. Then $U\subseteq X_1$. Let $(U', \{h\}, V')$ be a cyclic $3$-separation of $M$ such that $V\cup \{g\}\subseteq V'\cup \{h\}$ with the property that there is no other cyclic $3$-separation $(U'', \{h'\}, V'')$ in which $V'\cup \{h\}$ is a proper subset of $V''\cup \{h'\}$. Observe that such a cyclic $3$-separation exists as we can choose $(U, \{g\}, V)$ if necessary. If every element in $U'$ is deletable, then, as $U'\subseteq X_1$ and $|U'|\ge 3$, it follows that $X_1$ has at least two elastic elements. Thus we may assume that there is an element in $U'$ that is not deletable. By the dual of Lemma~\ref{key2}, either $U'$, and thus $X_1$, contains at least two elastic elements or $U'\cup \{h\}$ is a $4$-element fan, \blue{or $U'$ is contained in a $\Theta$-separator}. If \blue{$U'\cup \{h\}$ is a $4$-element fan,} then, by Lemma~\ref{fans & seps},
$$((U'\cup \{h\})-\{f\}, \{f\}, E\blue{(M)}-(U'\cup \{h\}))$$
is a vertical $3$-separation, where $f$ is the spoke-end of the $4$-element fan $U'\cup \{h\}$. But then, as $X_1\cap V$ is non-empty, $Y_1\cup \{e_1\}$ is properly contained in $E\blue{(M)}-(U'\cup \{h\})$, contradicting maximality. \blue{If $U'$ is contained in a $\Theta$-separator, then, by the dual of Lemma~\ref{key2}, $U'$ is a circuit and there is an element $w$ of $U'$ such that $(U'-\{w\})\cup\{h\}$ is a cosegment. But then
$$((U'\cup \{h\})-\{w\}, \{w\}, E(M)-(U'\cup \{h\}))$$
is a vertical $3$-separation of $M$, contradicting the maximality of $Y_1\cup \{e_1\}$ as $Y_1\cup \{e_1\}$ is properly contained in $E(M)-(U'\cup \{h\})$.} Hence we may assume that $|Y_1\cap U|=1$.

Let $Y_1\cap U=\{y\}.$ Since $|Y_1\cap U|=1$, we have $|Y_1\cap V|\ge 2$ and so, by two applications of uncrossing, $X_1\cap U$ and $(X_1\cap U)\cup \{g\}$ are both $3$-separating. \blue{Since $M$ is $3$-connected and $|X_1\cap U|\ge 2$, these sets are exactly $3$-separating.} If $y\not\in \cl(X_1\cap U)$, then, by Lemma~\ref{orthogonality}, $y\in \cl^*(V\cup \{g\})$. But then $V\cup \{g\}$ is not coclosed, a contradiction. Thus $y\in \cl(X_1\cap U)$, and so $y\in \cl((X_1\cap U)\cup \{g\})$. \blue{Now} $y\not\in \cl^*(V\cup \{g\})$, and so $y\not\in \cl^*(V)$. \blue{Hence as $(X_1\cap U)\cup \{g\}$ and, therefore, the complement $V\cup \{y\}$ is $3$-separating,} Lemma~\ref{3sep1} implies that $y\in \cl(V)$. Therefore, as $(X_1\cap U)\cup \{g\}$ and $V$ each have rank at least three, it follows that $((X_1\cap U)\cup \{g\}, \{y\}, V)$ is a vertical $3$-separation of $M$. \blue{Note that $r(V)\ge 3$; otherwise, $(X_1\cap V)\subseteq \cl(\{y, e_1\})$, in which case, $Y_1\cup \{e_1\}$ is not closed.} But $(X_1\cap U)\cup \{g\}$ is a proper subset of $X_1$, a contradiction to the maximality of $Y_1\cup \{e_1\}$. This last contradiction completes the proof of the lemma.
\end{proof}

We now combine Lemmas~\ref{key2} and~\ref{key3} to prove Theorem~\ref{main1}.

\begin{proof}[Proof of Theorem~\ref{main1}]
Let $(X, \{e\}, Y)$ be a vertical $3$-separation of $M$, where $Y\cup \{e\}$ is maximal, and suppose that $X\cup \{e\}$ is not a $4$-element fan \blue{and $X$ is not contained in a $\Theta$-separator}. If at least one element in $X$ is not contractible, then, by Lemma~\ref{key2}, $X$ contains at least two elastic elements. On the other hand if every element in $X$ is contractible, then by Lemma~\ref{key3}, $X$ again contains at least two elastic elements. \blue{This completes the proof of the theorem.}
\end{proof}

We end the paper by establishing Corollary~\ref{main2}.
\begin{proof}[Proof of Corollary~\ref{main2}]
Let $M$ be a $3$-connected matroid. If every element of $M$ is elastic, then the corollary holds. Therefore suppose that $M$ has at least one non-elastic element, $e$ say. Up to duality, we may assume that $\si(M/e)$ is not $3$-connected. Then, by Lemma~\ref{vertical1}, $M$ has a vertical $3$-separation $(X, \{e\}, Y)$. As $r(X), r(Y)\ge 3$, this implies that $|E(M)|\ge 7$, and so we deduce that every element in a \blue{$3$-connected} matroid with at most six elements is elastic. Now, \blue{suppose that $M$ has no $4$-element fans and no $\Theta$-separators, and} let $(X', \{e'\}, Y')$ be a vertical $3$-separation such that $Y'\cup \{e'\}$ is maximal and contains $Y\cup \{e\}$. Then it follows by Theorem~\ref{main1} that $X'$, and hence $X$, contains at least two elastic elements. Interchanging the roles of $X$ and $Y$, \blue{an identical argument} gives us that $Y$ also contains at least two elastic elements. Thus, $M$ contains at least four elastic elements.
\end{proof}

\section*{Acknowledgments} The authors thank the referee for their comments. The fourth author was supported by the New Zealand Marsden Fund.

\end{document}